\newtheorem*{maintheorem}{Main Theorem}
\newtheorem{Theorem}{Theorem}[section]
\newtheorem*{Theorem A}{Theorem A}
\newtheorem*{Theorem A'}{Theorem A'}
\newtheorem*{Theorem C'}{Theorem C'}
\newtheorem*{Conj*}{Conjecture}
\newtheorem{Definition}[Theorem]{Definition}
\newtheorem{Proposition}[Theorem]{Proposition}
\newtheorem{Lemma}[Theorem]{Lemma}
\newtheorem*{Remark}{Remark}
\newtheorem{Remark-numbered}[Theorem]{Remark}
\newtheorem{Remarks-numbered}[Theorem]{Remarks}
\newtheorem{Corollary}[Theorem]{Corollary}
\newtheorem*{Claim}{Claim}
\newtheorem{Claim-numbered}{Claim}
 \def\NN{{\mathbb N}} 
 \def\RR{{\mathbb R}}
   \def\cN{{\cal N}}
\def\cE{{\cal E}}    
\def\cF{{\cal F}}  \def\cL{{\cal L}}
\newcommand{\id}{\operatorname{Id}}
\newcommand{\sing}{{\operatorname{Sing}}}
\def\dim{\operatorname{dim}}
\def\Sing{\operatorname{Sing}}
\begin{document}

\title{{On the nonlinear Poincar\'e flow}}

\author{Sylvain Crovisier and Dawei Yang\footnote{S.Crovisier was partially supported by \emph{ISDEEC} ANR-16-CE40-0013, and by the ERC project 692925 \emph{NUHGD}. D. Yang  was partially supported by National Key R\&D Program of China (2022YFA1005801), by NSFC 12171348 and NSFC 12325106.
}}


\maketitle


\begin{abstract} 
We develop a tool in order to analyse the dynamics of differentiable flows with singularities.
It provides an abstract model for the local dynamics that can be used in order to control the size of invariant manifolds.
This work is the first part of the results announced in~\cite{CY2}.
\smallskip

\noindent
\emph{2000 Mathematics Subject Classification:} 37C10, 37D30.\\
\emph{Keywords:} Vector field, Poincar\'e flow, dominated splitting, singularity.
\end{abstract}

\section{Introduction}

It is generally believed that the dynamics of vector fields is similar to the dynamics of diffeomorphisms.
Indeed if $X$ is a vector field over a closed Riemannian manifold $M$ which generates the flow $(\varphi_t)_{t\in \RR}$, and if there exists a global cross section $\Sigma$, then the first return map to $\Sigma$
is a diffeomorphism whose dynamics reflects those of the flow. In general there does not exist a global cross section, but Poincar\'e used local cross sections in order to study the dynamics around periodic orbits.
 
When one considers a general compact invariant set $\Lambda$ instead of a periodic orbit, Liao \cite{Lia63} extended the above ideas of Poincar\'e to the regular part of $\Lambda$ and he defined the notion of \emph{linear Poincar\'e flow} on the normal bundle. As summarized by Bonatti-da Luz \cite{BdL}: ``for flows, hyperbolic structures live on the normal bundle for the linear Poincar\'e flow, but not on the tangent bundle''. However, the  linear Poincar\'e flow cannot be defined at singularities.
 
Singularities are special orbits of vector fields: these are those points $\sigma\in M$ such that $X(\sigma)=0$. The other points are said to be \emph{regular} points. Denote by ${\rm Sing}(X)$ the set of singularities of $X$. 
Singularities alone support very simple dynamics: they are in fact the fixed points of the flow. However, when one considers a general compact invariant set with singularities, the regular orbits may accumulate singularities.
This complicates the study of the dynamics on such invariant sets. The famous Lorenz attractor \cite{Lo} is such an example.

The derivative with respect to the space variable $D\varphi_t: TM\to TM$ is said to be the \emph{tangent flow} of $X$. 
For any regular point $x$, let $\cN_x=\{v\in T_x M:~\left<v,X(x)\right>=0\}$
be its normal tangent space.
This defines a normal bundle $\cN_{M\setminus{\rm Sing}(X)}$ on the regular set, which is a priori non-compact. A linear flow $\Psi_t:~\cN_{M\setminus{\rm Sing}(X)}\to \cN_{M\setminus{\rm Sing}(X)}$ is then obtained in the following way: for any vector $v\in\cN_x$ with $x\in M\setminus{\rm Sing}(X)$
$$\Psi_t\cdot v=D\varphi_t\cdot v-\frac{\left<X(\varphi_t(x)),D\varphi_t\cdot v\right>}{\left|X(\varphi_t(x))\right|^2}X(\varphi_t(x)).$$

Due to the existence of singularities, some compactness will be lost. In this paper, we use techniques for studying flows with singularities that may be useful for
other problems.

In order to analyze the tangent dynamics (for instance in order to prove the existence of a dominated splitting,
and then build invariant manifolds), one needs to analyze the local dynamics near $\Lambda$.
For a diffeomorphism $f$, one usually lifts the local dynamics to the tangent bundle:
for each $x\in M$, one defines a diffeomorphism $\cL f_x\colon T_xM\to T_{f(x)}M$,
which preserves the $0$-section (i.e. $\cL f_x(0_x)=0_{f(x)}$) and is locally conjugated to $f$
through the exponential map:
$\exp_{f(x)}\circ \cL f_x=f\circ \exp_x$ near $0_x$ in $T_xM$.
 It defines in this way a local fibered system on the bundle $TM\to M$.
For flows one introduces a similar notion.

\begin{Definition}\label{d.local-flow}
Let $(\varphi_t)_{t\in \RR}$ be a continuous flow over a compact metric space $K$, and $\cN\to K$ be a continuous Riemannian vector bundle.
A \emph{local $C^k$ fibered flow} $\psi$ on $\cN$ is a continuous family of $C^k$ diffeomorphisms $\psi_{t}\colon \cN_x\to \cN_{\varphi_t(x)}$, for $(x,t)\in K\times \RR$, such that:
\begin{itemize}
\item[--] Each map $\psi_t$ preserves the $0$-section of $\cN$.
\item[--] The flow property holds near the $0$-section:
there is $\beta_0>0$ such that for each $x\in K$, $t_1,t_2\in \RR$, and $u\in \cN_x$ satisfying
$\big[\forall s\in[0,1], \max(\|\psi_{s.t_1}(u)\|, \|\psi_{s.t_2}(\psi_{t_1}(u))\|)\leq \beta_0\big]$, 
then
\begin{equation}\label{e.identification}
\psi_{t_1+t_2}(u)=\psi_{t_2}\circ \psi_{t_1}(u).
\end{equation}
\end{itemize}
\end{Definition}

\begin{Definition}\label{Def:identification}
A \emph{$C^k$ identification} $\mathfrak{h}$ on an open set $U\subset K$
is a continuous family of $C^k$ diffeomorphisms $\mathfrak{h}_{y,x}\colon \cN_y\to \cN_x$ indexed by
pairs of close points $x,y \in U$, satisfying for some $\beta_0,r_0>0$ the following: for any $x,y,z\in U$ and $u\in \cN_y$
with $d(z,x),d(z,y)<r_0$ and  $\|u\|<\beta_0$,
$$\mathfrak{h}_{z,x}\circ \mathfrak{h}_{y,z}(u)=\mathfrak{h}_{y,x}(u).$$
\end{Definition}
We say that the identification $\mathfrak{h}$ is \emph{compatible with the local fibered flow}  $(\psi_t)_{t\in \RR}$, if it satisfies some commutation property near the $0$-section, see Section~\ref{s.identifications} for the formal definition.

For a vector field $X$, a natural way to lift the dynamics is to define a \emph{Poincar\'e map} by projecting the normal spaces $\cN_x$ and
$\cN_{\varphi_t(x)}$ above two points of a regular orbit by using the exponential map and the holonomy along the orbits of the flow.
One obtains a local diffeomorphism $\psi_t$ from a neighborhood of $0_x$ in $\cN_x$ to
$\cN_{\varphi_t(x)}$. The advantage of this construction is that the ambient dimension has been dropped by $1$. The linearization of the Poincar\'e map $\psi_t$ at $0_x$ coincides with the linear Poincar\'e flow $\Psi_t$.

A new difficulty appears: the domain of the Poincar\'e maps degenerate near the singularities.
For that reason one introduces the \emph{rescaled Poincar\'e flow}:
$$\psi^*_t(u)=\|X(\varphi_t(x))\|^{-1}\cdot \psi_t(\|X(x)\|\cdot u).$$
We denote it by $\psi^*_{t,x}$ if we want to indicate the base point $x$.

Our main result is that this family of maps can be compactified
as a local fibered flow, assuming that the singularities are not degenerate.

\begin{maintheorem}
Let $X$ be a $C^k$ vector field, $k\geq 1$, on a closed manifold $M$ of dimension $d$ and $\Lambda$ be a compact set, invariant by the flow $(\varphi_t)_{t\in\RR}$
associated to $X$, such that $DX(\sigma)$ is invertible for each  $\sigma\in {\rm Sing}(X)\cap \Lambda$.
Let $(\psi^*_t)_{t\in \RR}$ be the rescaled nonlinear Poincar\'e flow on $\cN|_{\Lambda\setminus {\rm Sing}(X)}$.

Then, there exist a topological flow $\widehat \varphi$ on a compact metric space $\widehat \Lambda$,
and a local $C^k$ fibered flow $\widehat {\psi^*}$ on a Riemannian vector bundle $\widehat {\cN}\to \widehat \Lambda$
whose fibers have dimension $d-1$ such that:
\begin{itemize}
\item[--] there is an injective map $i\colon \Lambda\setminus {\rm Sing}(X)\to \widehat \Lambda$
whose image is open in $\widehat\Lambda$, and which conjugates the restrictions
of $\varphi$ to $\Lambda\setminus {\rm Sing}(X)$ and of $\widehat \varphi$
to $i(\Lambda\setminus {\rm Sing}(X))$;

\item[--] there is an isometric bundle isomorphism $I\colon \cN|_{\Lambda\setminus{\rm Sing}(X)}\to \widehat \cN|_{i(\Lambda\setminus{\rm Sing}(X))}$ fibered over $i$;

\item[--] $I$ conjugates, near the $0$-section, the rescaled nonlinear Poincar\'e flow $\psi^*$
over $\Lambda\setminus {\rm Sing}(X)$ and the restriction of the local fibered flow $\widehat {\psi^*}$
to $i(\Lambda\setminus {\rm Sing}(X))$. Locally:
$\widehat {\psi^*}=I\circ \psi^*\circ I^{-1}$.
\end{itemize}
Moreover for any open set $\widehat U\subset \widehat \Lambda$
whose closure is disjoint from 
$i(\Lambda\setminus{\rm Sing}(X))$, there exist $C>0$ and a $C^k$ identification $\widehat{\mathfrak{h}}$ on $\widehat U$
which is compatible with the flow $(\widehat {\psi^*_{C\cdot t}})_{t\in\RR}$.
\end{maintheorem}

Our main goal initially was to prove a conjecture by Palis for $3$-dimensional flows (see the announcement in~\cite{CY1}).
The present work is the first part of the proof of this conjecture (and is extracted from our previous unpublished preprint~\cite{CY2}); the other parts of the proof of the conjecture will appear in subsequent papers. The Main Theorem here is essential in order to prove the existence of stable manifolds that are weakly contracted, with uniform $C^2$ topology, allowing to get distortion control.

As we mentioned the local fibered flow $(\psi_t)_{t\in \RR}$ degenerates near the singularities.
The rescaling solves the degeneracy. From the proof of the Main Theorem, we get in particular the following property (see Theorem~\ref{t.non-degeneracy}):
\emph{In the setting of the Main Theorem, and given $t,\varepsilon>0$, there is $\delta,\beta>0$ such that for any regular points $x,y\in M$, if $\RR.X(x)$ and $\RR.X(y)$ are $\delta$-close, then $\psi_{t,x}^*|_{B(0_x,\beta)}$ and $\psi_{t,y}^*|_{B(0_y,\beta)}$ are $\varepsilon$-close in the $C^k$ topology
on the .}

Our result extends previous constructions.
Liao first introduced the rescaling~\cite{Lia89} of the linear Poincar\'e flow. For every regular point $x\in M$ and every $v\in\cN_x$, one defines
$$\Psi_t^*\cdot v=\frac{\|X(x)\|}{\|X(\varphi_t(x))\|}\Psi_t\cdot v=\frac{\Psi_t\cdot v}{\|D\varphi_t(x)|_{\RR\cdot X(x)}\|}.$$
It is clear that the linearization of the rescaled Poincar\'e map $\psi^*_t$ is $\Psi^*_t$.
Note that the hyperbolicity properties of $\Psi^*$ allows to detect the existence of dominated splittings
for the tangent flow $D\varphi$: 
a subbundle of $\cN$ is uniformly contracted by $\Psi^*$ if and only if
the dynamics by $\Psi$ is dominated by $D\varphi|_{\RR\cdot X}$
(we call this property: mixed dominated splitting).
Consequently, \emph{if $\cN$ has a dominated splitting $\cN=\cE\oplus \cF$ for $\Psi$
(or equivalently for $\Psi^*$), and if  $\cE$ is uniformly contracted by $\Psi^*$,
then $TM$ has a dominated splitting $TM=E\oplus F$ such that $\dim(E)=\dim(\cE)$
and $X$ is tangent to $F$}, see~\cite[Lemma 5.5]{lgw-extended} and~\cite[Lemma 2.13]{GY}.

The introduction of the compactification of the rescaled non-linear Poincar\'e flow gives a general
framework in order to study locally flows with singularities.
Gan and Yang~\cite{GY} also considered previously the rescaled non-linear Poincar\'e flow and proved some uniform properties about the size of invariant manifolds in the $C^1$-topology. Our construction allows to recover their results in a different way
which avoids long computations and to extend it to the $C^k$-topologies. As an example of application, these techniques allow to prove that,
\emph{at points $x$ satisfying some Pliss-hyperbolicity condition,  there exist stable manifolds with size of order
$d(x, \Sing(X))$}, see~\cite[Lemma 2.18]{GY}.

As another application, our compactification allows to obtain the proof of a property for vector fields
which generalizes a classical result for diffeomorphisms, see Section~\ref{Sec:negative-periodic} below. Versions of this result have been proved
in~\cite{LY} for $C^{2}$ vector fields and in~\cite[Lemma 2.20]{Y} for $C^1$ vector fields on surfaces:
\emph{let $X$ be a $C^1$ vector field on a closed manifold $M$ such that $DX(\sigma)$ is invertible at each singularity $\sigma$ and let $\mu$ be an ergodic non-atomic probability measure with $d-1$ negative Lyapunov exponents. Then $\mu$ is supported on an attracting periodic orbit.}

\paragraph{\bf Notations.}
During the text we adopt the following conventions for the notations of the different flows:
\begin{enumerate}
\item \emph{Linear flows} (on vector bundles) are denoted by capital letters $\Phi,\Psi,...$,
whereas nonlinear flows are denoted by lowercase letters: $\varphi,\psi,...$
\item \emph{Rescaled flows} (linear or nonlinear) are marked with a star: $\Phi^*,\Psi^*,\psi^*,...$
\item \emph{Grassmann dynamics:} flows $\varphi$ or bundles $\cE$ on $M$ will be lifted to the unit tangent bundle $T^1M$ and projective tangent bundle $PTM$, and denoted with a tilde: $\widetilde \varphi$, $\widetilde \cE$.
\item \emph{Compactifications:} flows $\varphi$ or bundles $\cE$ on $M$ will be lifted to some blowups $\widehat M$ of $M$,
that will be called \emph{extended flows} or \emph{extended bundles} and denoted with a hat: $\widehat\varphi$, $\widehat \cE$.
\end{enumerate}
Some of the constructions done in Sections~\ref{s.grassmann} and~\ref{s.blow-up} below can be summarized as follows:
\begin{displaymath}
\xymatrix{
&(\widehat \cN, \widehat \Psi)\ar[d]
&(\cN,\Psi)\ar@{^{(}->}[r]^{\widetilde I}\ar@{_{(}->}[l]_{I} \ar[d]
&(\widetilde \cN, \widetilde \Psi)\ar[d]&\\
(\widehat {TM},\widehat \Phi) \ar[ur]^{\widehat Q} \ar[d]^{P} \ar[r]^{\widehat \pi}
&(\widehat M,\widehat \varphi)  \ar[d]^{p}
&(M\setminus {\rm Sing}(X),\varphi) \ar@{^{(}->}[r]^{\widetilde{\text{\it \i}}}\ar@{_{(}->}[l]_{\hspace{-0.5cm}i}
&(\widetilde M=PTM,\widetilde \varphi)  \ar[d]^{\widetilde p}
&(\widetilde {TM},\widetilde \Phi) \ar[ul]_{\widetilde Q} \ar[d]^{\widetilde P} \ar[l]_{\widetilde \pi}\\
(TM,\Phi=D\varphi) \ar[r]^{\quad \pi}
&(M,\varphi)
&&(M,\varphi)
&(TM,\Phi=D\varphi) \ar[l]_{\hspace{-0.3cm}\pi}
}
\end{displaymath}

\section{Dynamics on Grassmann bundles}\label{s.grassmann}
We associate to the flow $(\varphi_t)_{t\in\RR}$ several $C^{k-1}$ linear and projective flows.

\subsection{The \emph{tangent flow} $(\Phi_t)_{t\in\RR}=(D\varphi_t)_{t\in \RR}$ on the tangent bundle $TM$}
The tangent flow is the flow on the tangent bundle
$TM$ which fibers over $(\varphi_t)_{t\in\RR}$ through the projection
$\pi\colon TM\to M$ and is obtained by differentiation.

\subsection{The \emph{Grassmann flow} $(\widetilde \varphi_t)_{t\in\RR}$ on $PTM$}
We consider the projective tangent manifold $\widetilde M=PTM$, which is the Grassmannian space of
$1$-dimensional linear spaces $L_x\subset T_xM$ at points $x\in M$
and let $\widetilde p\colon PTM\to M$ denote the canonical projection.

The projective tangent flow on
$PTM$ is obtained from $(D\varphi_t)_{t\in\RR}$ by taking the quotient:
$$\widetilde \varphi_t(\RR\cdot v)=\RR \cdot (D\varphi_t\cdot v) \text{ for non vanishing} v\in TM.$$
Sometimes we prefer to work with the unit tangent manifold $T^1M$ which is a double cover of $PTM$. The tangent flow induces by normalization a flow on this space, that we also denote by  $(\widetilde \varphi_t)_{t\in\RR}$ for simplicity.

\subsection{The linear flow $(\widetilde \Phi_t)_{t\in\RR}$ on $\widetilde{TM}$}
The projection $\widetilde p\colon PTM\to M$ allows
to pull back the tangent bundle $\pi\colon TM\to M$ and induces
a linear bundle $\widetilde \pi\colon \widetilde{TM}\to PTM$.
It can be obtained as the restriction of the first projection
$PTM\times TM\to PTM$ to the set of pairs
$(x,v)$ such that $\widetilde p(x)=\pi(v)$.
It is naturally endowed with the pull back metric of $TM$ and it is locally trivial in a neighborhood of preimages
$\widetilde p^{-1}(z)$ for $z\in M$.
The second projection $PTM\times TM\to PTM$
induces a projection $\widetilde P \colon \widetilde{TM}\to TM$ which is an isometry on each fiber.

By pulling back the tangent flow $D\varphi$, one defines a linear flow
$(\widetilde \Phi_t)_{t\in\RR}$ on $\widetilde{TM}$ over $(\widetilde \varphi_t)_{t\in\RR}$.

\subsection{The \emph{normal flow} $(\widetilde \Psi_t)_{t\in\RR}$ on $\widetilde{\cN}$}
We define the normal bundle $\widetilde \cN\to PTM$:
any element of $PTM$ is a one-dimensional line $L\subset T_xM$
and its fiber in $\widetilde \cN$ is the orthogonal subspace of $\widetilde{TM}_L$
to $L$:
$$\widetilde \cN_L:=\{u\in \widetilde{TM}_L: \widetilde P(u)\perp L\}.$$
Let us consider the orthogonal quotient
$\widetilde Q\colon \widetilde {TM}\to \widetilde N$ . For any $u\in \widetilde \cN_L$,
\begin{equation}\label{e.projection}
\widetilde Q(u):=u-\frac{<u, v>}{\|v\|^2}\cdot v,
\end{equation}
where $v$ is any non-zero vector in $\widetilde{TM}$
such that $\widetilde P(v)\in L$.

The flow $(\widetilde \Phi_t)_{t\in\RR}$ induces a linear flow $(\widetilde \Psi_t)_{t\in\RR}$ on the quotient:
$$
\widetilde \Psi_t\cdot u:=(\widetilde p\circ \widetilde \Phi_t)\cdot u.$$

\subsection{The \emph{linear Poincar\'e flow} $(\Psi_t)_{t\in\RR}$ on the normal bundle $\cN$}
\label{ss.linear-Poincare-flow}
The normal bundle $\cN\to M\setminus {\rm Sing}(X)$ over the space of non-singular points $x$
is the union of the vector subspaces $\cN_x=X(x)^\perp$,
which can be seen as the image of an orthogonal projection
$q\colon TM\to \cN$ (defined in a similar way as in~\eqref{e.projection}).
Contrary to the previous bundles, $\cN$ depends on the vector field $X$
and is as smooth as $X$.

Using the vector field one lifts $M\setminus {\rm Sing}(X)$ to $PTM$ by the map
(a local section of $\widetilde p$):
$$\widetilde{\text{\it \i}}\colon x\mapsto (x,\RR\cdot X(x)).$$
In this way the spaces $\cN_x$ can be identified with $\widetilde \cN_{\widetilde{\text{\it \i}}(x)}$ by $\widetilde P$.
Let us denote $\widetilde I\colon \cN_x \to \widetilde \cN_{\widetilde{\text{\it \i}}(x)}$ the identification induced by $\widetilde P^{-1}$.

\section{Blowup}\label{s.blow-up}

We will consider compactifications of $M\setminus {\rm Sing}(X)$ and of the tangent bundle
$TM|_{M\setminus \text{Sing}(X)}$ which allow to extend the line field $\RR X$. This is given by the classical blowup.
In this section we assume that $DX(\sigma)$ is invertible at each singularity $\sigma$. In particular ${\rm Sing}(X)$
is a finite set.

\subsection{The blownup manifold $\widehat M$}
We can blow up $M$
at each singularity of $X$ and get a new closed manifold $\widehat M$
and a projection $p\colon \widehat M\to M$ which is one-to-one above $M\setminus {\rm Sing}(X)$.
Each singularity $\sigma\in{\rm Sing}(X)$ has been replaced by the projectivization
$PT_\sigma M$. We denote $i\colon M\setminus {\rm Sing}(X)\to \widehat M$
the inverse of $p$ on the regular set.

More precisely, at each singularity $\sigma$,
one add $T^1_\sigma M$ to $M\setminus \{\sigma\}$ in order
to build a manifold with boundary. Locally,
it is defined by the chart
$[0, \varepsilon)\times T^1_\sigma M\to (M\setminus \{\sigma\})\cup T^1_\sigma M$
given by:
$$ (s,u)\mapsto 
\begin{cases}
&\exp(s\cdot u) \text{ if }s\neq 0,\\
&u
\text{ if } s=0.
\end{cases}
$$
One then gets $\widehat M$ by identifying points $(0,u)$ and $(0,-u)$
on the boundary.

It is sometimes convenient to
lift the dynamics on $M\setminus \{\sigma\}$ near $\sigma$
and work in the local coordinates
$(-\varepsilon, \varepsilon)\times T^1_\sigma M$.
These coordinates define a double covering of an open subset of the blowup $\widehat M$
and induce a chart from the quotient $(-\varepsilon, \varepsilon)\times T^1_\sigma M /_{(s,u)\sim(-s,-u)}$
to a neighborhood of $p^{-1}(\sigma)$ in $\widehat M$.

\subsection{The \emph{extended flow $(\widehat \varphi_t)_{t\in \RR}$} on $\widehat M$}
The following result is proved in~\cite[Section III]{takens}.
\begin{Proposition}
The flow $(\varphi_t)_{t\in \RR}$ induces a $C^{k-1}$ flow $(\widehat \varphi_t)_{t\in\RR}$
on $\widehat M$ which is associated to a $C^{k-1}$ vector field $\widehat X$. 
For $\sigma\in {\rm Sing}(X)$, this flow preserves  $PT_\sigma M$, and acts as:
$$\widehat \varphi_t(u)=\tfrac{D\varphi_t(0)\cdot u}{\|D\varphi_t(0)\cdot u\|}.$$
The vector field $\widehat X$ coincides at $u\in PT_\sigma M$ with $DX(\sigma).u$
in $T_{u}(PT_\sigma M)\cong T_\sigma M/(\RR\cdot u)$.
\end{Proposition}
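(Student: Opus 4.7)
The plan is to work locally near each singularity $\sigma \in \Sing(X)$ in the double-cover chart $(-\varepsilon,\varepsilon) \times T^1_\sigma M$ of the blowup, pull back $X$, and show that the resulting vector field extends with one derivative less regularity across $\{s=0\}$. Since $\widehat M \setminus p^{-1}(\Sing(X))$ is diffeomorphic to $M \setminus \Sing(X)$ via $p$, the pulled back vector field is already $C^k$ off the exceptional divisor; only the smooth extension across the divisor requires argument. Using the exponential map $\exp_\sigma$ to identify a neighborhood of $\sigma$ in $M$ with a neighborhood of $0$ in $T_\sigma M$, I can reduce to the model case of a $C^k$ vector field $X$ on $T_\sigma M$ with $X(0)=0$ and $DX(0) = DX(\sigma)$.

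The technical heart is the following computation. Apply Hadamard's lemma to $X$: there exist $C^{k-1}$ maps $F_1,\dots,F_d \colon T_\sigma M \to T_\sigma M$ with $F_i(0) = \partial_i X(0)$ and $X(x) = \sum_i x_i F_i(x)$, so in the polar chart $(s,u) \mapsto s\cdot u$,
$$ X(s\cdot u) = s \cdot Y(s,u), \qquad \text{where } Y(s,u) := \sum_i u_i F_i(s\cdot u) $$
is of class $C^{k-1}$ on $(-\varepsilon,\varepsilon) \times T^1_\sigma M$ and satisfies $Y(0,u) = DX(\sigma)\cdot u$. The pulled back flow in these coordinates must satisfy $\dot s \cdot u + s\cdot \dot u = X(s\cdot u)$ with $\dot u \in T_u T^1_\sigma M$, i.e. $\langle \dot u, u\rangle = 0$. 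Projecting onto $u$ and onto $u^\perp$ yields
$$ \dot s = s\cdot \langle Y(s,u), u\rangle, \qquad s\cdot \dot u = s \cdot \bigl(Y(s,u) - \langle Y(s,u), u\rangle\, u\bigr). $$
Dividing the second equation by $s$ (valid for $s \neq 0$ but extending continuously to $s=0$) gives a $C^{k-1}$ vector field $\widetilde X(s,u) = (s\langle Y,u\rangle,\; Y - \langle Y,u\rangle u)$ on the whole cover, which agrees with the pullback of $X$ away from $\{s=0\}$. A direct check using $Y(-s,-u) = -Y(s,u)$ shows that $\widetilde X$ is equivariant under $(s,u) \mapsto (-s,-u)$, hence descends to a $C^{k-1}$ vector field $\widehat X$ on the quotient, which glues with the pullback $p^*X$ on $M \setminus \Sing(X)$ into a globally defined $C^{k-1}$ vector field on the closed manifold $\widehat M$.

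Since $\widehat M$ is compact, $\widehat X$ integrates to a $C^{k-1}$ flow $(\widehat \varphi_t)_{t\in\RR}$ on $\widehat M$. On the exceptional divisor $\{s=0\}/\!\sim\, =\, PT_\sigma M$ the first equation gives $\dot s = 0$, so $PT_\sigma M$ is invariant, and
$$ \widehat X|_{PT_\sigma M}(u) = DX(\sigma)\cdot u - \langle DX(\sigma)\cdot u, u\rangle\, u, $$
which represents the class of $DX(\sigma)\cdot u$ under $T_u T^1_\sigma M \cong T_\sigma M/(\RR\cdot u)$, as claimed. The stated formula $\widehat\varphi_t(u) = D\varphi_t(0)\cdot u / \|D\varphi_t(0)\cdot u\|$ on $PT_\sigma M$ is then verified by differentiating the right-hand side, setting $A := DX(\sigma)$ and $\xi(t) := e^{tA}u$: one finds $\frac{d}{dt}(\xi/\|\xi\|) = A(\xi/\|\xi\|) - \langle A(\xi/\|\xi\|), \xi/\|\xi\|\rangle(\xi/\|\xi\|)$, matching $\widehat X|_{PT_\sigma M}$.

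The main obstacle is the regularity assertion: a naïve change to polar coordinates would suggest a singularity at $s=0$, and the honest $C^{k-1}$ extension relies essentially on Hadamard's lemma to factor out one power of $s$ from $X(s\cdot u)$. The invertibility of $DX(\sigma)$ is not actually needed to construct the extension itself; it is used only to ensure that $\Sing(X)$ is finite and that blowing up at finitely many points keeps $\widehat M$ closed, so that $\widehat X$ generates a flow defined for all $t \in \RR$.
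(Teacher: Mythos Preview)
Your argument is correct and complete. The paper itself does not prove this proposition but cites Takens~\cite{takens}; your write-up is essentially a clean, self-contained version of Takens' argument (polar coordinates on the double cover, Hadamard's lemma to factor $X(s\cdot u)=s\cdot Y(s,u)$ with $Y$ of class $C^{k-1}$, then project onto $u$ and $u^\perp$). The equivariance check under $(s,u)\mapsto(-s,-u)$ and the verification of the flow formula on $PT_\sigma M$ are both correct, and your remark that invertibility of $DX(\sigma)$ is used only to guarantee finiteness of $\Sing(X)$ (hence compactness of $\widehat M$ and completeness of $\widehat X$) is accurate and matches the paper's own comment at the start of Section~\ref{s.blow-up}.
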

In particular the tangent bundle $T\widehat M$ extends $TM|_{M\setminus \text{Sing}(X)}$,
the linear flow  $D\widehat \varphi$ on $T\widehat M$ extends the restriction of $D\varphi$
to $TM|_{M\setminus \text{Sing}(X)}$
and the vector field $\widehat X$  extends $X$.
Note that each eigendirection $u$ of $DX(\sigma)$ at a singularity $\sigma$
induces a singularity of $\widehat X$.

\begin{Remark}
In~\cite{takens}, the vector field and the flow are extended locally on the space $(-\varepsilon, \varepsilon)\times T^1_\sigma M$,
but the proof shows that these extensions are invariant under the map $(s,u)\mapsto (-s,-u)$, hence are also defined on $\widehat M$.
\end{Remark}

\subsection{The \emph{extended bundle} $\widehat {TM}$
and \emph{extended tangent flow} $(\widehat {\Phi}_t)_{t\in \RR}$}
One associates a bundle $\widehat \pi\colon \widehat {TM}\to \widehat M$
which is the pull-back of the bundle $\pi\colon TM\to M$  over $M$
by the map $p\colon \widehat M\to M$.
It can be obtained as the restriction of the first projection
$\widehat M\times TM\to \widehat M$ to the set of pairs
$(x,v)$ such that $p(x)=\pi(v)$.
It is naturally endowed with the pull back metric of $TM$ and it is locally trivial in a neighborhood of preimages
$p^{-1}(z)$, $z\in M$.
Note that the linear bundle $\widehat{TM}$ is different from the tangent bundle
$T\widehat M$.

The tangent flow $(D\varphi_t)_{t\in \RR}$ can be pull back to $\widehat {TM}$
as a $C^{k-1}$ linear flow $(\widehat {\Phi}_t)_{t\in \RR}$
that we call \emph{extended tangent flow}.

\subsection{The \emph{extended line field} $\widehat {\RR X}$}
The vector field $X$  induces a line field ${\RR X}$ on $M\setminus \text{Sing}(X)$
which admits an extension to $\widehat {TM}$. It is defined locally as follows.

\begin{Proposition}\label{p.extended-field}
At each singularity $\sigma$, let $U$ be a small neighborhood in $M$
and $\widehat U=(U\setminus \{\sigma\})\cup PT_\sigma M$ be a neighborhood of $PT_\sigma M$
in $\widehat M$.
Then, {the map $x\mapsto \frac{\exp_\sigma^{-1}(x)}{\|X(x)\|}$ on $U\setminus\{\sigma\}$ extends to $\widehat U$ as a $C^{k-1}$-map which coincides at $u\in PT_\sigma M$ with $\frac{u}{\|DX(\sigma)\cdot u\|}$,} and 
the map $x\mapsto \frac{\|X(x)\|}{\|\exp_\sigma^{-1}(x)\|}$ on $U\setminus \{\sigma\}$
extends to $\widehat U$ as a $C^{k-1}$-map which coincides at $u\in PT_\sigma M$
with $\|DX(\sigma)\cdot u\|$.

In the local coordinates $(-\varepsilon, \varepsilon)\times T^1_\sigma M$ associated to $\sigma\in \text{Sing}(X)$,
the lift of the vector field $X_1:=X/{\|X\|}$ on $M\setminus \text{Sing}(X)$ extends as a (non-vanishing) $C^{k-1}$ section
$\widehat X_1\colon (-\varepsilon, \varepsilon)\times T^1_\sigma M\to \widehat{TM}$.
For each $x=(0,u)\in p^{-1}(\sigma)$,
one has $$\widehat X_1(x)=\frac{DX(\sigma)\cdot u}{\|DX(\sigma)\cdot u\|}.$$
\end{Proposition}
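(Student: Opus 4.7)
The plan is to work locally at a singularity $\sigma$ in exponential coordinates. Let $\widetilde X\colon V\to T_\sigma M$ denote the representation of $X$ via $\exp_\sigma$ on a neighborhood $V$ of $0\in T_\sigma M$. Since $X$ is $C^k$ and vanishes at $\sigma$ with invertible derivative $DX(\sigma)$, the map $\widetilde X$ is $C^k$, satisfies $\widetilde X(0)=0$, and has derivative at $0$ equal to $DX(\sigma)$ under the canonical identification $T_0T_\sigma M\cong T_\sigma M$.

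The core step is Hadamard's lemma applied radially. Writing $v=su$ with $u\in T^1_\sigma M$ and $s$ small, integration along the segment from $0$ to $su$ yields
$$\widetilde X(su)\;=\;s\cdot g(s,u),\qquad g(s,u)\;:=\;\int_0^1 D\widetilde X(tsu)\cdot u\,dt.$$
Since $D\widetilde X$ is $C^{k-1}$ and integration in $t$ preserves regularity, $g$ is a $C^{k-1}$ map on $(-\varepsilon,\varepsilon)\times T^1_\sigma M$. Moreover $g(0,u)=DX(\sigma)\cdot u$ is non-zero for every unit vector $u$ because $DX(\sigma)$ is invertible; compactness of $T^1_\sigma M$ and continuity then ensure that $g$ stays uniformly bounded away from $0$ on a neighborhood of $\{s=0\}$, after shrinking $\varepsilon$ if necessary. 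Every claim of the proposition will reduce to rewriting the relevant quantity in terms of $g$.

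For the first two maps, evaluating at $x=\exp_\sigma(su)$ with $s>0$ and substituting $\widetilde X(su)=s\,g(s,u)$ yields
$$\frac{\exp_\sigma^{-1}(x)}{\|X(x)\|}\;=\;\frac{u}{\|g(s,u)\|},\qquad \frac{\|X(x)\|}{\|\exp_\sigma^{-1}(x)\|}\;=\;\|g(s,u)\|.$$
Both right-hand sides are $C^{k-1}$ on the full chart $(-\varepsilon,\varepsilon)\times T^1_\sigma M$ and take the stated values $u/\|DX(\sigma)\cdot u\|$ and $\|DX(\sigma)\cdot u\|$ at $s=0$, providing the announced $C^{k-1}$ extensions to $\widehat U$.

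For the last statement, in the double-cover chart the pull-back bundle $\widehat{TM}$ is trivialized by the exponential chart, so its fibers are identified with $T_\sigma M$. In this trivialization the lift of $X_1=X/\|X\|$ over $s>0$ is represented by $\widetilde X(su)/\|\widetilde X(su)\|=g(s,u)/\|g(s,u)\|$, and defining $\widehat X_1(s,u):=g(s,u)/\|g(s,u)\|$ on the whole chart gives a $C^{k-1}$ non-vanishing section of $\widehat{TM}$ with $\widehat X_1(0,u)=DX(\sigma)\cdot u/\|DX(\sigma)\cdot u\|$, as required. The main point to watch is the reconciliation of these local Hadamard formulas with the blowup structure (the identification $(s,u)\sim(-s,-u)$ and the local triviality of $\widehat{TM}$ obtained by pulling back the exponential trivialization); once this bookkeeping is set, the entire content of the proposition is carried by the factorization $\widetilde X(su)=s\,g(s,u)$ together with the invertibility of $DX(\sigma)$.
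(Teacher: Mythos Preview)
Your proof is correct and essentially identical to the paper's own argument. The paper introduces the section $\overline X(s,u)=\big(\int_0^1 DX(r\cdot s\cdot u)\,dr\big)\cdot u$, which is exactly your $g(s,u)$ obtained via Hadamard's lemma, and then derives all three extensions from the non-vanishing of $\overline X$ just as you do; your write-up is perhaps slightly more explicit in isolating the factorization $\widetilde X(su)=s\,g(s,u)$ as the key step, but the content is the same.
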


The proposition gives an extension of $X_1$ to the local chart $(-\varepsilon, \varepsilon)\times T^1_\sigma M$
which is a double covering of a neighborhood of $p^{-1}(\sigma)$ in $\widehat M$.
A priori, the extension of $X_1$ is not preserved by the symmetry $(-s,-u)\sim(s,u)$
and does not define a map $\widehat M\to \widehat{TM}$.
However, the line field $\RR \widehat X_1$ is invariant by the local symmetry $(s,u)\mapsto (-s,-u)$,
hence induces a $C^{k-1}$-line field $\widehat {\RR X}$ on $\widehat {TM}$ invariant by $(\widehat {D\varphi_t})_{t\in \RR}$.

\begin{proof}
In a local chart near a singularity, we have
$$X(x)=\int_{0}^1(DX(r\cdot x)\cdot x)\;dr.$$
Working in the local coordinates $(s,u)\in(-\varepsilon,\varepsilon)\times T^1_\sigma M$, we get
$$X(x)=\bigg(\int_{0}^1DX(r\cdot s\cdot u)\;dr\bigg)\;. \;(s\cdot u).$$
This allows us to define a $C^{k-1}$ section
in a neighborhood of $p^{-1}(\sigma)$ defined by
$$\overline X\colon(s,u)\mapsto  \bigg(\int_{0}^1DX(r\cdot s\cdot u)\;dr\bigg)\cdot  u.$$
This section is $C^{k-1}$, is parallel to $X$ (when $s\neq 0$)
and does not vanish.
Consequently $\frac{\overline X}{\|\overline X\|}$ is $C^{k-1}$ and extends the vector field
$X_1:=X/{\|X\|}$ as required.

Since $\overline X$ extends as $DX(\sigma)\cdot u$ at $u\in PT_\sigma M$,
then $X_1$ extends as $DX(\sigma)\cdot u/ \|DX(\sigma)\cdot u\|$.

Note also that for $s\neq 0$,
$\|\overline X(s,u)\|$ coincides with $\|X(x)\|/\|\exp_\sigma^{-1}(x)\|$ where $x=\exp_\sigma(s\cdot u)$ is a point
of $M\setminus \{\sigma\}$ close to $\sigma$.
Since $\overline X$ is $C^{k-1}$ and does not vanish,
$(s,u)\mapsto \|\overline X(s\cdot u)\|$ extends as a $C^{k-1}$-function in the local coordinates
$(-\varepsilon,\varepsilon)\times T^1_\sigma M$. It is invariant by the symmetry
$(s,u)\sim (-s,-u)$, hence the maps {$x\to \frac{\exp_\sigma^{-1}(x)}{\|X(x)\|}$ and} $x\mapsto \|X(x)\|/\|\exp_\sigma^{-1}(x)\|$
for $x\in M\setminus \{\sigma\}$ close to $\sigma$ extends
as $C^{k-1}$ maps on a neighborhood of $PT_\sigma M$ in $\widehat M$.
\end{proof}

\subsection{\emph{Extended normal bundle} $\widehat {\cN}$ and \emph{extended linear Poincar\'e flow} $(\widehat \Psi_t)_{t\in \RR}$}
The orthogonal spaces to the lines of $\widehat {\RR X}$ define
a $C^{k-1}$ linear subbundle $\widehat {\cN}$ of $\widehat{TM}$. One denotes
by $\widehat Q\colon \widehat{TM}\to \widehat{\cN}$ the orthogonal projection. Since
$\widehat {\RR X}$ is preserved by the extended tangent flow, the projection of $(\widehat {D\varphi_t})_{t\in \RR}$
defines the $C^{k-1}$ \emph{extended linear Poincar\'e flow} $(\widehat \Psi_t)_{t\in \RR}$ on $\widehat {\cN}$.

\subsection{Alternative construction}
As we saw in Section~\ref{ss.linear-Poincare-flow},
one can embed $M\setminus {\rm Sing}(X)$ in $PTM$ by the map
$\widetilde{\text{\it \i}}\colon x\mapsto (x,\RR \cdot X(x))$. Taking the closure in $PTM$,
one gets a set which is invariant by the projective tangent flow $\widetilde \varphi$.
This way to compactify $M\setminus {\rm Sing}(X)$
depends on the vector field $X$ and not only on the finite set ${\rm Sing}(X)$.
It is sometimes called \emph{Nash blowup}, see~\cite{No}.

When $DX(\sigma)$ is invertible at each singularity,
Proposition~\ref{p.extended-field} shows that the closure is homeomorphic to $\widehat M$.
Indeed the continuous map
$(-\varepsilon,\varepsilon)\times T^1_\sigma M\to T^1M$
defined by $(s,u)\mapsto (s\cdot u, X(s\cdot u)/\|X(s\cdot u)\|)$ is injective
and descends as an injective quotient map from a neighborhood of $p^{-1}(\sigma)$
to $PTM$.

The restriction of the normal bundle $\widetilde \cN\to PTM$ to the closure of $M\setminus {\rm Sing}(X)$ in $PTM$
gives the extended normal bundle $\widehat {\cN}$.
The extended linear Poincar\'e flow $(\widehat \Psi_t)_{t\in \RR}$ is the restriction of
$(\widetilde \Psi_t)_{t\in\RR}$ to the extended normal bundle $\widehat {\cN}$.
This is the approach followed in~\cite{lgw-extended} in order to compactify of the
linear Poincar\'e flow.

\begin{Remark}
As said previously, the vector field $X$ extends as a vector field $\widehat X$
on $\widehat M$, and still admits singularities $\widehat \sigma$
Assuming that $\widehat \sigma$ is unique on a neighborhood $U$,
one may wonder if the line field $\RR \widehat X$ which is defined
on the tangent bundle to $U\setminus \{\widehat \sigma\}$ extends to $U$
since the limit set of $\RR \widehat X$ at $\widehat \sigma$ may
take all the eigendirection of $D\widehat \varphi_t(\widehat \sigma)$.
This is the reason why, in order to extend $\RR \widehat X$, one has to work
with the pull-back bundle $\widehat{TM}$ and not the tangent bundle
$T\widehat M$.
\end{Remark}

\section{Nonlinear local flows}\label{ss.def-flow}
We now define non-linear flows on neighborhoods of the $0$-section of $TM$ and $\cN$.

\subsection{The \emph{lifted flow} $(\cL\varphi_t)_{t\in\RR}$ on $TM$}
There exists $r_0>0$ such that the ball $B(0,r_0)$ in
each tangent space $T_xM$ projects in $M$ diffeomorphically
by the exponential map.
For each $t\in [0,1]$ and $x\in M$, the map
$$\cL\varphi_t\colon y\mapsto \exp^{-1}_{\varphi_t(x)}\circ \varphi_t\circ \exp_x(y)$$
sends diffeomorphically a neighborhood of $0$ in $T_xM$ to a neighborhood of $0$
in $T_{\varphi_t(x)}M$.
This extends to a local flow $(\cL\varphi_t)_{t\in\RR}$ in a neighborhood of the $0$-section of $TM$, that is called \emph{lifted flow}. It is tangent to $(D\varphi_t)_{t\in\RR}$ at the $0$-section, i.e. $D(\cL\varphi_t)(0_x)=D\varphi_t(x)$.

\subsection{The \emph{fiber-preserving lifted flow} $(\cL_0\varphi_t)_{t\in\RR}$ on $TM$}
Sometimes we will need to preserve the base point and we introduce a
family of fiber-preserving maps $(\cL_0\varphi_t)_{t\in\RR}$, defined by:
$$\cL_0\varphi_t(y)=\exp^{-1}_{x}\circ \varphi_t\circ \exp_x(y).$$
Since the $0$-section is not preserved, this is not a local flow and it will
be considered only for short times $t$. This family is called \emph{fiber-preserving lifted flow}.

\subsection{The \emph{nonlinear Poincar\'e flow} $(\psi_t)_{t\in\RR}$ on $\cN$}\label{ss.NLPF}
Using the exponential map and the transversality between $\cN_x$ and $X(x)$,
one gets each $x\in M\setminus {\rm Sing}(X)$ some number $r_x\in (0,r_0)$
such that for any $t\in [0,1]$, the holonomy map of the flow induces a local diffeomorphism
$\psi_t$ from $B(0,r_x)\subset \cN_x$ to a neighborhood of  $0$ in $\cN_{\varphi_t(x)}$.

This extends to a local flow $(\psi_t)_{t\in\RR}$ in a neighborhood of the $0$-section
in $\cN$, that is called \emph{nonlinear Poincar\'e flow}. It is tangent to
$(\Psi_t)_{t\in\RR}$ at the $0$-section of $\cN$, i.e. $D\psi_t(0_x)=\Psi_t(x)$.
\medskip

The following lemmas give estimates on the radius $r_x$.
They follow from the fact that  the vector field $X$ is almost constant in the {$\beta\cdot \|X(y)\|$-neighborhood of $y$} for $\beta>0$ small enough. It is essentially contained in~\cite[Lemma 2.3]{GY} and in the proof there.

\begin{Lemma}\label{l.project}
For any $\delta,t_0>0$, there exists $\beta>0$ with the following property.

Let $x \in M\setminus \sing(X)$ and $y\in M$ such that $d(x,y)\leq \beta\cdot \|X(x)\|$.
Then,  there is a unique $t\in (-t_0,t_0)$
such that $\varphi_t(y)$ belongs to the image by $\exp_x$ of $B(0_x,\delta{\|X(x)\|})\subset \cN_x$.
\end{Lemma}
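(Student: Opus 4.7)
The strategy is to exploit that in a chart of radius $O(\|X(x)\|)$ around $x$, the vector field $X$ is nearly constant equal to $X(x)$ (because $X$ is Lipschitz on the compact manifold $M$), so any nearby trajectory crosses the small transversal $\exp_x(\cN_x \cap B(0_x, \delta\|X(x)\|))$ transversally at a unique nearby time. To implement this, I would set $\lambda := \|X(x)\|$, $v := X(x)/\lambda$, and work in the exponential chart at $x$: while $\varphi_t(y)$ remains in $\exp_x(B(0_x, r_0))$, define $F(t) := \exp_x^{-1}(\varphi_t(y))$ and decompose $F(t) = g(t)\, v + H(t)$ with $g(t) \in \RR$ and $H(t) \in \cN_x$. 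The goal reduces to producing a unique $t^* \in (-t_0, t_0)$ with $g(t^*) = 0$ and $|H(t^*)| < \delta\lambda$.

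The key technical estimate would combine the Lipschitz bound $\|X(z) - X(x)\| \leq L\cdot d(z, x)$ on $M$ with the fact that $D\exp_x$ is $C^0$-close to the identity on $B(0_x, r_0)$ uniformly in $x$: as long as $\varphi_t(y) \in B(x, C\beta\lambda)$ for an appropriate constant $C$, one gets $F'(t) = X(x) + O(\beta\lambda)$, and in particular $g'(t) \geq \lambda/2$ and $|H'(t)| = O(\beta\lambda)$. Since the hypothesis $d(x, y) \leq \beta\lambda$ gives $|g(0)|, |H(0)| \leq \beta\lambda$, a bootstrap/continuity argument keeps these estimates valid on an interval around $0$ of length at least $2\beta$; strict monotonicity of $g$ then produces a unique zero $t^*$ with $|t^*| \leq 2\beta$, and integrating $H'$ yields $|H(t^*)| \leq \beta\lambda + O(\beta^2\lambda)$. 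Taking $\beta$ small enough that $2\beta < t_0$ and $\beta(1 + O(\beta)) < \delta$ finishes the existence part together with the size conditions.

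The main obstacle will be extending uniqueness of $t^*$ from the bootstrap interval of size $O(\beta)$ to the full interval $(-t_0, t_0)$: after leaving the chart, the orbit could a priori come back and re-enter the small disk. I would handle this by further shrinking $\beta$: right after $t^*$, strict monotonicity of $g$ forces $\varphi_t(y)$ to exit $\exp_x(B(0_x, \delta\lambda))$ through its forward face, and since the flow speed is comparable to $\lambda$ on any neighborhood of $x$ of radius $O(\lambda)$, any subsequent re-entry into the transversal disk during $(-t_0, t_0)$ would force the orbit of $x$ itself to return within distance $2\delta\lambda$ of $x$ at some time in $(-t_0, t_0) \setminus \{0\}$; a uniform continuity argument on the compact manifold $M$ (after shrinking $\beta$) rules this out in the local regime, yielding the required uniqueness.
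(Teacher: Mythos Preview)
Your strategy matches the paper's, which gives no detailed proof but only observes that $X$ is almost constant on balls of radius $\beta\|X(x)\|$ and refers to \cite[Lemma~2.3]{GY}. Your chart decomposition $F(t)=g(t)\,v+H(t)$ together with the monotonicity estimate $g'(t)\geq \lambda/2$ is the natural way to make this precise, and your existence argument (producing $t^*$ with $|t^*|\leq 2\beta$ and $|H(t^*)|<\delta\lambda$) is correct.

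The gap is in your global uniqueness step. You correctly observe (via Gronwall) that a second crossing at some time $t_1$ would force $\varphi_{t_1}(x)$ to lie within roughly $2\delta\lambda$ of $x$, but then assert that ``shrinking $\beta$'' rules this out. It does not: $\beta$ governs only $d(x,y)$ and has no influence on whether the orbit of $x$ itself is recurrent. If $x$ lies on a periodic orbit of period $T<2t_0$, then already $y=x$ meets the transversal disk at $t=0$ and again at $t=T$, and no choice of $\beta$ prevents this. The resolution is that in every application of this lemma in the paper $t_0$ is taken small (in the Global invariance proof, small enough that $d(z,\varphi_t(z))<\delta$ for all $z\in M$ and $|t|\leq t_0$). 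In that regime your local monotonicity of $g$ already covers the entire interval $(-t_0,t_0)$, since the orbit cannot exit and re-enter the ball of radius $O(\delta)\lambda$ in time less than a uniform multiple of $\delta$; the global re-entry scenario you worry about simply does not arise, and the extra argument you propose is unnecessary.
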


\begin{Lemma}\label{l.flow}
For any $\rho>0$, there exists $\delta>0$ with the following property.

Let $x \in M\setminus \sing(X)$ and
$u\in B(0_x,\delta\cdot \|X(x)\|)\subset \cN_x$.
Then for any $t\in (0,1)$, there is a unique $t'>0$ such that
$\varphi_{t'}(\exp_x(u))=\exp_{\varphi_t(x)}\circ \psi_t(u)$.
Moreover $\max\big(\tfrac t {t'},\tfrac{t'} t\big)<1+\rho$.
\end{Lemma}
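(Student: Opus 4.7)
The plan is to split the argument into two parts: the existence and uniqueness of $t'>0$ satisfying $\varphi_{t'}(\exp_x(u))=\exp_{\varphi_t(x)}(\psi_t(u))$, and the control of the ratio $t'/t$.

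For existence, I would note that the very definition of the nonlinear Poincar\'e flow in Section~\ref{ss.NLPF} gives such a $t'$ whenever the orbit of $y:=\exp_x(u)$ crosses $\exp_{\varphi_t(x)}(\cN_{\varphi_t(x)})$ near $\varphi_t(x)$. To guarantee this I would apply Lemma~\ref{l.project} at the point $\varphi_t(x)$: Gronwall with $L:=\|DX\|_\infty$ gives $d(\varphi_t(y),\varphi_t(x))\leq e^{L}\|u\|$ and $\|X(\varphi_t(x))\|\geq e^{-L}\|X(x)\|$ for $t\in(0,1)$, so for $\|u\|\leq \delta\|X(x)\|$ the distance is at most $e^{2L}\delta\,\|X(\varphi_t(x))\|$. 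Taking $\delta$ small enough ensures the hypothesis of the lemma; positivity of $t'$ follows since $t>0$ and the correction is small.

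For the ratio estimate, I would introduce the scalar function
\[
D(s,t):=\bigl\langle\exp^{-1}_{\varphi_t(x)}(\varphi_s(y)),\,X(\varphi_t(x))\bigr\rangle,
\]
characterized by $D(t',t)=0$, and exploit that $D(0,0)=\langle u,X(x)\rangle=0$ (since $u\in\cN_x$) to write $D(t,t)=\int_0^t(\partial_s+\partial_t)D(r,r)\,dr$. A direct computation gives $\partial_sD(r,r)=\|X(\varphi_r(x))\|^2+O(\|u\|\,\|X(\varphi_r(x))\|)$ and $\partial_tD(r,r)=-\|X(\varphi_r(x))\|^2+O(\|u\|\,\|X(\varphi_r(x))\|)$; the leading $\|X\|^2$ terms cancel because $D(r,r)$ vanishes identically when $y=x$ (both points sit on the same orbit). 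Integrating yields $|D(t,t)|\leq C\,\|u\|\int_0^t\|X(\varphi_r(x))\|\,dr\leq C'\,\delta\,\|X(x)\|^2\,t$ via $\|X(\varphi_r(x))\|\leq e^L\|X(x)\|$. The implicit function theorem, combined with $\partial_sD(t,t)\geq\tfrac12\|X(\varphi_t(x))\|^2$ (for $\delta$ small) and $\|X(x)\|/\|X(\varphi_t(x))\|\leq e^L$, then produces $|t'-t|\leq C''\delta\,t$, whence $\max(t/t',t'/t)<1+\rho$ once $\delta$ is chosen smaller than a constant multiple of $\rho$.

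The main obstacle will be the cancellation of the $\pm\|X(\varphi_r(x))\|^2$ leading terms in $\partial_sD+\partial_tD$: without it, $|t'-t|$ would be bounded only by $O(\delta)$ and the ratio $t'/t$ would blow up as $t\to 0^+$. A closely related subtlety is to keep all error estimates \emph{scale-invariant with respect to $\|X(x)\|$}, since $x$ may be arbitrarily close to a singularity; this is achieved by systematically comparing $\|X\|$ along the orbit to $\|X(x)\|$ through Gronwall-type bounds, so that every $O(\cdot)$ appearing in the argument has the correct homogeneity under $\|X(x)\|\to 0$.
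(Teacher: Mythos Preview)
Your proposal is correct and, in contrast to the paper, actually constitutes a proof: the paper does not prove Lemma~\ref{l.flow} but merely refers to \cite[Lemma~2.3]{GY} together with the one-line heuristic that ``the vector field $X$ is almost constant in the $\beta\cdot\|X(y)\|$-neighborhood of $y$.'' Your argument is a self-contained implementation of exactly this idea.

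The two approaches differ in packaging rather than substance. The route implicit in the paper (and explicit in \cite{GY}) is to \emph{rescale first}: dilate a neighborhood of $x$ by $\|X(x)\|^{-1}$ so that the vector field becomes approximately a unit constant vector field on a ball of fixed radius; in those coordinates the holonomy time from one transversal to another is trivially $t'=t+O(\delta)$ with multiplicative control, and the homogeneity in $\|X(x)\|$ is automatic because it has been scaled out. Your approach keeps the original coordinates and instead tracks the $\|X\|$-homogeneity by hand through the function $D(s,t)$, relying on the cancellation $(\partial_s+\partial_t)D(r,r)=O(\|u\|\,\|X(\varphi_r(x))\|)$ (which, as you correctly observe, is forced by $D(r,r)\equiv 0$ when $u=0$). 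The rescaling argument is shorter and makes the scale-invariance transparent; your argument is more explicit and shows concretely where each factor of $\|X\|$ arises. Both yield the same bound $|t'-t|\le C\delta\,t$.

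One small logical point worth tightening: you derive positivity of $t'$ from the ratio bound, but the mean-value step producing that bound needs $\partial_s D(\xi,t)\ge \tfrac12\|X(\varphi_t(x))\|^2$ for $\xi$ between $t$ and $t'$, which in turn requires an \emph{a priori} smallness of $|t'-t|$. This is supplied by your invocation of Lemma~\ref{l.project} (giving $|t'-t|<t_0$ for any prescribed $t_0$), so the argument is not circular, but the order of dependencies should be made explicit: first Lemma~\ref{l.project} to localize $t'$, then the $D$-computation to sharpen $|t'-t|\le C\delta\,t$, and only then conclude $t'>0$.
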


\section{Rescaled flows}
In order to compensate the effect of the singularities on the transverse behavior of the flow,
one rescales the bundles.

\subsection{\emph{Rescaled linear} and \emph{nonlinear Poincar\'e flows}
$(\Psi^*_t)_{t\in\RR}$, $(\psi^*_t)_{t\in\RR}$ on $\cN$}\label{ss.rescaled}
By Lemma~\ref{l.flow}, there exists
$\beta>0$ such that 
the radii $r_x$ introduced in Section~\ref{ss.NLPF} to define $\psi$ satisfies,
at any $x\in M\setminus {\rm Sing}(X)$,
$$r_x>\beta \|X(x)\|.$$
We can thus rescale the nonlinear Poincar\'e flow. We get for each $x\in M\setminus {\rm Sing}(X)$ and $t\in [0,1]$ a map $\psi^*_t$ which sends
diffeomorphically $B(0,\beta)\subset \cN_x$ inside $\cN_{\varphi_t(x)}$, defined by:
$$\psi^*_t(y)=\|X(\varphi_t(x))\|^{-1}\cdot \psi_t(\|X(x)\|\cdot y).$$
Again, this induces a local flow $(\psi_t^*)_{t\in\RR}$ in a neighborhood of the $0$-section
in $\cN$, that is called \emph{rescaled nonlinear Poincar\'e flow}.
Its tangent map at the $0$-section defines the \emph{rescaled linear Poincar\'e flow} $(\Psi_t^*)$. Thus,
$$\Psi_t^*\cdot v=\frac{\|X(x)\|}{\|X(\varphi_t(x))\|}\Psi_t\cdot v.$$

\subsection{\emph{Rescaled tangent flow} $(\Phi_t^*)_{t\in\RR}$ and \emph{rescaled lifted flow} $(\cL\varphi_t^*)_{t\in\RR}$ on $TM$}
The \emph{rescaled lifted flow} is defined on a neighborhood of the $0$-section
in $TM$ by
$$\cL\varphi^*_t(y)=\|X(\varphi_t(x))\|^{-1}\cdot \cL\varphi_t(\|X(x)\|\cdot y).$$
Its tangent map at the $0$-section defines the \emph{rescaled tangent flow} $(\Phi_t^*)_{t\in\RR}$:
$$\Phi_t^*\cdot v=\frac{\|X(x)\|}{\|X(\varphi_t(x))\|}D\varphi_t\cdot v.$$

\subsection{The \emph{rescaled fiber-preserving lifted flow} $(\cL_0\varphi_t^*)_{t\in\RR}$ on $TM$}
The flow is defined similarly:
$$\cL_0\varphi^*_t(y)=\|X(x)\|^{-1}\cdot \cL_0\varphi_t(\|X(x)\|\cdot y).$$

\section{Compactifications of nonlinear local fibered flows}
The rescaled flows introduced above extend to the bundles $\widehat {TM}$
or $\widehat {\cN}$.
In the following, one will assume that $DX(\sigma)$ is invertible at each singularity.

{Related to the ``local $C^k$ fibered flow''} (Definition~\ref{d.local-flow}),
we will use the following notion.

\begin{Definition}
Consider a continuous Riemannian vector bundle {$\cN$ over a compact metric space $K$.}
A map $H\colon \cN\to \cN$ is \emph{$C^k$ fibered}, if it fibers over a homeomorphism $h$ of $K$
and if each induced map $H_x\colon \cN_x\to \cN_{h(x)}$ is $C^k$
and depends continuously on $x$ for the $C^k$ topology.
\end{Definition}
\subsection{The \emph{extended lifted flow}}
The following proposition compactifies the rescaled lifted flow $(\cL\varphi^*_t)_{t\in \RR}$
(and the rescaled tangent flow $(\Phi_t^*)_{t\in \RR}$) as
local fibered flows on $\widehat {TM}$.
\begin{Proposition}\label{p.compactify-lifted}
The rescaled lifted flow $(\cL\varphi_t^*)_{t\in \RR}$
extends as a local $C^k$ fibered flow $(\widehat{\cL\varphi^*_t})_{t\in \RR}$ on $\widehat{TM}$.
The rescaled tangent flow $(\Phi_t^*)_{t\in \RR}$ extends as a linear flow
$(\widehat{\Phi_t^*})_{t\in \RR}$ on $\widehat{TM}$.

Moreover, there exists $\beta>0$ such that, for each $t\in [0,1]$, $\sigma\in {\rm Sing}(X)$ and $x=(0,u)\in p^{-1}(\sigma)$,
on the ball $B(0_x,\beta)\subset \widehat {T_{x}M}$ the map $\widehat{\cL\varphi_t^*}$
writes as:
\begin{equation}\label{e.extend}
y\mapsto \frac{\|DX(\sigma)\cdot u\|}{\|DX(\sigma)\circ D\varphi_t(\sigma)\cdot u\|}D\varphi_t(\sigma)\cdot y.
\end{equation}
\end{Proposition}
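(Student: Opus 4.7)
The plan is to work in the local double-cover coordinates $(s,u)\in(-\varepsilon,\varepsilon)\times T^1_\sigma M$ of a neighborhood of $p^{-1}(\sigma)$ in $\widehat M$, for each singularity $\sigma\in\Sing(X)$, and exhibit a continuous extension of $\cL\varphi_t^*$ to $\{s=0\}$ which is $C^k$ on each fiber and varies continuously with the base point in the $C^k$-topology. Over $M\setminus\Sing(X)$ the rescaled lifted flow is already defined and $C^k$, so only the behavior as $s\to 0$ needs to be analyzed; the constant $\beta>0$ is chosen so that $\beta\|X(x)\|$ stays below the exponential injectivity radius uniformly in $x$, which is possible since $\|X(\cdot)\|$ is bounded.

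The central step is a factorization that pulls one power of $v$ out of the lifted flow. Set $F(x,v):=\exp^{-1}_{\varphi_t(x)}\circ\varphi_t\circ\exp_x(v)$; since $F(x,0)=0$, the fundamental theorem of calculus yields $F(x,v)=A(x,v)\cdot v$ with
$$A(x,v)=\int_0^1 D_vF(x,tv)\,dt,\qquad A(x,0)=D\varphi_t(x),$$
jointly $C^{k-1}$ in $(x,v)$. Substituting $v=\|X(x)\|y$ and dividing by $\|X(\varphi_t(x))\|$ rewrites the rescaled lifted flow as
$$\cL\varphi_t^*(y)=\frac{\|X(x)\|}{\|X(\varphi_t(x))\|}\;A\!\big(x,\|X(x)\|y\big)\cdot y,$$
displaying the $0/0$ indeterminacy at $x=\sigma$ as a product of two factors that each extend continuously to $\widehat M$. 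The scalar factor is handled by splitting
$$\frac{\|X(x)\|}{\|X(\varphi_t(x))\|}=\frac{\|X(x)\|/\|\exp_\sigma^{-1}(x)\|}{\|X(\varphi_t(x))\|/\|\exp_\sigma^{-1}(\varphi_t(x))\|}\cdot\frac{\|\exp_\sigma^{-1}(x)\|}{\|\exp_\sigma^{-1}(\varphi_t(x))\|}.$$
Proposition~\ref{p.extended-field} extends the first two quotients $C^{k-1}$ to $\widehat M$ with limits $\|DX(\sigma)u\|$ and $\|DX(\sigma)\widehat\varphi_t(u)\|$, while a Taylor expansion of $\varphi_t$ at $\sigma$ shows that the last ratio tends to $1/\|D\varphi_t(\sigma)u\|$; combined with $\widehat\varphi_t(u)=D\varphi_t(\sigma)u/\|D\varphi_t(\sigma)u\|$ the product collapses exactly to $\|DX(\sigma)u\|/\|DX(\sigma)D\varphi_t(\sigma)u\|$. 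For the operator factor, on the fixed ball $\|y\|<\beta$ the argument $\|X(x)\|y$ tends uniformly to zero as $x\to\sigma$, so $A(x,\|X(x)\|y)\to D\varphi_t(\sigma)$ uniformly in $y$; moreover each $y$-derivative of order $|\alpha|\geq 2$ of $\cL\varphi_t^*$ picks up an explicit factor $\|X(x)\|^{|\alpha|-1}\to 0$, so all $y$-derivatives up to order $k$ of the correction vanish uniformly on $B(0,\beta)$. Multiplying the two extensions yields formula~\eqref{e.extend}.

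The remaining items are essentially formal. The local flow property $\widehat{\cL\varphi_{t_1+t_2}^*}=\widehat{\cL\varphi_{t_2}^*}\circ\widehat{\cL\varphi_{t_1}^*}$ near the $0$-section is inherited from the regular set by continuity, and can be checked directly on the singular fibers using the chain rule $D\varphi_{t_1+t_2}(\sigma)=D\varphi_{t_2}(\sigma)D\varphi_{t_1}(\sigma)$ together with the multiplicativity of the scalar factor. The rescaled tangent flow $\Phi_t^*$ is the fiber-derivative of $\cL\varphi_t^*$ at $y=0$, so its extension $\widehat{\Phi_t^*}$ is given by $D_y\widehat{\cL\varphi_t^*}(0)$ and, at $(0,u)$, coincides with the linear map in~\eqref{e.extend}. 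The main technical point is the uniform $C^k$ control of the nonlinear correction as $x\to\sigma$: a naive Taylor expansion of $\cL\varphi_t$ at $v=0$ leaves a $1/\|X(\varphi_t(x))\|$ singularity in each higher-order term, and the factorization $F=A\cdot v$ is precisely what absorbs this singularity into the scalar factor so that it can be cancelled by the rescaling on the input side.
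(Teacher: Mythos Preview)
Your argument is correct and follows essentially the same route as the paper: the integral factorization $F(x,v)=A(x,v)\cdot v$ is exactly the paper's formula $\cL\varphi_t^*(y)=\frac{\|X(x)\|}{\|X(\varphi_t(x))\|}\int_0^1 D\varphi_t(x+r\|X(x)\|y)\cdot y\,dr$ written in coordinate-free form, and your treatment of the scalar factor via Proposition~\ref{p.extended-field} reproduces the content of the auxiliary Lemma~\ref{l.extend}. The one item you leave implicit is that Definition~\ref{d.local-flow} asks for diffeomorphisms of the \emph{whole} fiber $\cN_x\to\cN_{\varphi_t(x)}$, not just of $B(0_x,\beta)$; the paper handles this by gluing $\widehat{\cL\varphi_t^*}$ with the linear map $\widehat{\Phi_t^*}$ outside a small neighborhood of~$0$ and then extending to all $t\in\RR$ by composition, which you should mention for completeness.
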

\medskip

Before proving the proposition, one first shows:
\begin{Lemma}\label{l.extend}
The function $(x,t)\mapsto \frac{\|X(x)\|}{\|X(\varphi_t(x))\|}$ on $(M\setminus \text{Sing}(X))\times \RR$
extends as a positive $C^{k-1}$ function $\widehat M\times \RR\to \RR_+$
which is equal to $\frac{\|DX(\sigma)\cdot u\|}{\|DX(\sigma)\circ D\varphi_t(\sigma)\cdot u\|}$
when  $x=(0,u)\in p^{-1}(\sigma)$.

The map from $TM|_{M\setminus \text{Sing}(X)}$
into itself which sends $y\in T_xM$ to $\|X(x)\|\cdot y$, extends as a continuous map of $\widehat {TM}$
which vanishes on the set $p^{-1}(\text{Sing}(X))$ and is $C^{k-1}$ fibered.
\end{Lemma}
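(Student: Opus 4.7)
For the fibered multiplication map I argue directly. The pull-back $\hat x\mapsto\|X(p(\hat x))\|$ is a continuous non-negative function on $\widehat M$ that vanishes precisely on $p^{-1}(\mathrm{Sing}(X))$. The fibered map $\widehat{TM}\to\widehat{TM}$ sending $y\in\widehat{TM}_{\hat x}=T_{p(\hat x)}M$ to $\|X(p(\hat x))\|\cdot y$ is therefore jointly continuous on the total space and vanishes over $p^{-1}(\mathrm{Sing}(X))$. On each fiber it is linear, hence $C^\infty$, and its $C^{k-1}$-norm on any fixed ball is a constant multiple of the scalar $\|X(p(\hat x))\|$, so continuity of this scalar yields continuity of the fibered map in the $C^{k-1}$-topology. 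This settles the second statement.

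For the scalar function on $\widehat M\times\RR$, on the open set $(\widehat M\setminus p^{-1}(\mathrm{Sing}(X)))\times\RR$ the defining formula $\|X(x)\|/\|X(\varphi_t(x))\|$ is already $C^k$ and positive. The real work is near $p^{-1}(\sigma)$ for each singularity $\sigma$, where I would factor
\[
\frac{\|X(x)\|}{\|X(\varphi_t(x))\|}=\frac{\|X(x)\|/\|\exp_\sigma^{-1}(x)\|}{\|X(\varphi_t(x))\|/\|\exp_\sigma^{-1}(\varphi_t(x))\|}\;\cdot\;\frac{\|\exp_\sigma^{-1}(x)\|}{\|\exp_\sigma^{-1}(\varphi_t(x))\|}
\]
and extend each factor separately. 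By Proposition~\ref{p.extended-field}, numerator and denominator of the first quotient extend to positive $C^{k-1}$ functions $\hat g(\hat x)$ and $\hat g(\widehat\varphi_t(\hat x))$ on a neighbourhood $\widehat U$ of $p^{-1}(\sigma)$, with $\hat g(u)=\|DX(\sigma)\cdot u\|$ on $p^{-1}(\sigma)$; precomposition with the $C^{k-1}$ extended flow $\widehat\varphi$ preserves this regularity, so their ratio is $C^{k-1}$ and positive on $\widehat U\times\RR$.

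The second factor is treated by Hadamard's lemma in the local chart $(s,u)\in(-\varepsilon,\varepsilon)\times T^1_\sigma M$ that doubly covers $\widehat U$. Writing $x=\exp_\sigma(su)$ and using $\varphi_t(0)=0$, one has
\[
\varphi_t(x)=\int_0^1 D\varphi_t(r\,s\,u)\,dr\cdot(su)=s\cdot A(s,u,t)\cdot u,
\]
where $A(s,u,t):=\int_0^1 D\varphi_t(r\,s\,u)\,dr$ is $C^{k-1}$ jointly in $(s,u,t)$ with $A(0,u,t)=D\varphi_t(\sigma)$ invertible. Hence the ratio becomes
\[
\frac{\|\exp_\sigma^{-1}(x)\|}{\|\exp_\sigma^{-1}(\varphi_t(x))\|}=\frac{1}{\|A(s,u,t)\cdot u\|},
\]
which is positive and $C^{k-1}$ in a neighbourhood of $s=0$, with value $1/\|D\varphi_t(\sigma)\cdot u\|$ on the blowup. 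Since $A(-s,-u,t)=A(s,u,t)$, the function is invariant under $(s,u)\mapsto(-s,-u)$ and descends to $\widehat U\times\RR$. Multiplying the two extensions and evaluating at $\hat x=(0,u)\in p^{-1}(\sigma)$, where $\widehat\varphi_t(\hat x)=D\varphi_t(\sigma)\cdot u/\|D\varphi_t(\sigma)\cdot u\|$, the factor $\|D\varphi_t(\sigma)\cdot u\|$ that appears in the denominator of $\hat g(\widehat\varphi_t(\hat x))$ cancels the reciprocal from the second factor, yielding exactly $\|DX(\sigma)\cdot u\|/\|DX(\sigma)\circ D\varphi_t(\sigma)\cdot u\|$; agreement of the two definitions on the overlap of the two charts is automatic from uniqueness on $(\widehat M\setminus p^{-1}(\mathrm{Sing}(X)))\times\RR$. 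The one step requiring real care is the Hadamard factorisation, which turns the apparent $0/0$ indeterminacy at $s=0$ into a $C^{k-1}$ quotient with non-zero denominator together with the symmetry check $A(-s,-u,t)=A(s,u,t)$; once this is in place the remainder is routine composition of $C^{k-1}$ maps.
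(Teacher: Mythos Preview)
Your argument is correct and follows essentially the same route as the paper. For the scalar function you use exactly the paper's decomposition into the ratio $\hat g(\hat x)/\hat g(\widehat\varphi_t(\hat x))$ coming from Proposition~\ref{p.extended-field} and the ratio $\|\exp_\sigma^{-1}(x)\|/\|\exp_\sigma^{-1}(\varphi_t(x))\|$, and you spell out the Hadamard factorisation $\varphi_t(su)=s\,A(s,u,t)\,u$ that the paper only alludes to (``similar to the proof of Proposition~\ref{p.extended-field}''). For the fibered multiplication map your direct argument from continuity of $\hat x\mapsto\|X(p(\hat x))\|$ is slightly simpler than the paper's, which instead factors $\|X(x)\|=\bigl(\|X(x)\|/\|\exp_\sigma^{-1}(x)\|\bigr)\cdot\|\exp_\sigma^{-1}(x)\|$ near each singularity; both yield the required ``$C^{k-1}$ fibered'' conclusion since the fiber maps are linear and only continuous dependence on the base is demanded.
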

\begin{proof} From Proposition~\ref{p.extended-field},
in the local chart of $0=\sigma\in{\rm Sing}(X)$,
the map 
$x\mapsto \frac{\|X(x)\|}{\|\exp^{-1}_\sigma(x)\|}$
extends as a $C^{k-1}$ function which coincides at $u\in PT_\sigma M$ with $\|DX(\sigma)\cdot u\|$
and does not vanish.
We also extend the map $(x,t)\mapsto \|\exp_\sigma^{-1}(\varphi_t(x))\|/\|\exp_\sigma^{-1}(x)\|$ as a $C^{k-1}$ map
on $\widehat M\times \RR$ which coincides with $\|D\varphi_t(\sigma)\cdot u\|$ when $x=(0,u)$.
The proof is similar to the proof of Proposition~\ref{p.extended-field}.
This implies the first part of the lemma.

For the second part, one considers the product of the $C^{k-1}$ function $x\mapsto \frac{\|X(x)\|}{\|\exp_\sigma^{-1}(x)\|}$
with the $C^\infty$ fibered map which extends $y\mapsto \|\exp_\sigma^{-1}(x)\|\cdot y$.
\end{proof}
\medskip

\begin{proof}[Proof of Proposition~\ref{p.compactify-lifted}]
In local coordinates, the local flow $(\cL \varphi_t^*)_{t\in \RR}$ in $T_xM$ acts like:
$$\cL \varphi_t^*(y)=\|X(\varphi_t(x))\|^{-1}\left(\varphi_t(x+\|X(x)\|\cdot y)-\varphi_t(x)\right)$$
\begin{equation}\label{e.tangent-extension}
=\frac{\|X(x)\|}{\|X(\varphi_t(x))\|}
\int_0^1 D\varphi_t\left(x+r\|X(x)\|\cdot y\right)\cdot y\;dr.
\end{equation}
By Lemma~\ref{l.extend},
$(x,t)\mapsto \frac{\|X(x)\|}{\|X(\varphi_t(x))\|}$ and $(x,y)\mapsto \|X(x)\|\cdot y$ extend
as a continuous maps on $\widehat M\times \RR$ and
$\widehat{TM}$ respectively; hence
$(\cL \varphi_t^*)_{t\in \RR}$ extends continuously at $x=(0,u)\in p^{-1}(\sigma)$
as in~\eqref{e.extend}.
The extended flow is $C^k$ along each fiber. Moreover,
\eqref{e.tangent-extension} implies that it is $C^{k-1}$ fibered.
For $x\in M\setminus {\rm Sing}(X)$, the $k^{th}$ derivative along the fibers is equal to
$$\frac{\|X(x)\|^k}{\|X(\varphi_t(x))\|}D^k\varphi_t(x+\|X(x)\|\cdot y).$$
This converges to $\frac{\|DX(\sigma)\cdot u\|}{\|DX(\sigma)\circ D\varphi_t(\sigma)\cdot u\|}D\varphi_t(\sigma)$
when $k=1$ and to $0$ for $k>1$. 
The extended rescaled lifted flow is thus a local $C^{k}$ fibered flow defined on a uniform neighborhood of the $0$-section.

From Lemma~\ref{l.extend}, the rescaled linear flow $(\Phi_t^*)_{t\in \RR}$
extends to $\widehat {TM}$ and coincides at $x=(0,u)\in p^{-1}(\sigma)$ with
the map defined by~\eqref{e.extend}.
From~\eqref{e.tangent-extension},
it coincides also with the flow tangent to $(\widehat{\cL \varphi_t^*})_{t\in \RR}$ at the $0$-section.

In order to define $\widehat{\cL \varphi_t^*}$ on the whole bundle $\widehat {TM}$
(and get a fibered flow as in Definition~\ref{d.local-flow}), one first glues each diffeomorphism $\widehat{\cL \varphi_t^*}$
for $t\in [0,1]$ on a small uniform neighborhood of $0$
with the linear map $D\varphi_t^*$ outside a neighborhood of $0$
in such a way that $\widehat{\cL \varphi_0^*}=\id$.
One then defines $\widehat{\cL \varphi_t^*}$ for other times by:
$$\widehat{\cL \varphi_{-t}^*}=\left( \widehat{\cL \varphi_t^*}\right)^{-1} \text{ for } t>0,$$
$$\widehat{\cL \varphi_{n+t}^*}=\widehat{\cL \varphi_{t}^*}\circ \widehat{\cL \varphi_{1}^*} \circ \dots \circ \widehat{\cL \varphi_{1}^*}
\quad \text{ ($n+1$ terms), \quad for } t\in [0,1] \text{ and } n\in \NN.$$
\end{proof}
\medskip

In a same way we compactify the rescaled fiber-preserving lifted flow $(\cL_0\varphi_t^*)$.

\begin{Proposition}
The rescaled fiber-preserving lifted flow $(\cL_0\varphi_t^*)_{t\in \RR}$ extends as a local $C^{k}$ fibered flow
on $\widehat {TM}$.
More precisely, for each $x\in \widehat{M}$, it defines a $C^k$ map
$\widehat{\cL_0\varphi^*}$
from $\RR\times\widehat {TM}_{x}$ to $\widehat{TM}_{x} $
which depends continuously on $x$ for the $C^k$ topology.

Moreover there exists $\beta>0$ such that, for each $t\in [0,1]$, $\sigma\in {\rm Sing}(X)$ and $x=(0,u)\in p^{-1}(\sigma)$,
on the ball $B(0_x,\beta)\subset \widehat{TM}_{x}$ the map $\widehat{\cL_0\varphi_t^*}$
has the form:
\begin{equation}\label{e.ext-L0}
y\mapsto D\varphi_t(\sigma)\cdot y +\frac{D\varphi_t(\sigma)\cdot u-u}{\|DX(\sigma)\cdot u\|}.
\end{equation}
\end{Proposition}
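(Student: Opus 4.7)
The plan is to follow the strategy of the proof of Proposition~\ref{p.compactify-lifted}, but since $\cL_0\varphi_t^*$ does not preserve the $0$-section I would first split it into a linear-in-$y$ piece plus a translation depending only on $(x,t)$, and extend each separately.

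In local coordinates near a singularity $\sigma$, one writes
$$\cL_0\varphi_t^*(y)=\|X(x)\|^{-1}\bigl(\varphi_t(x+\|X(x)\|\cdot y)-x\bigr)=\int_0^1 D\varphi_t(x+r\|X(x)\|\cdot y)\cdot y\,dr+\frac{\varphi_t(x)-x}{\|X(x)\|}.$$
The first (linear) term is exactly of the form treated by~\eqref{e.tangent-extension} in the proof of Proposition~\ref{p.compactify-lifted}, now without the outer factor $\|X(x)\|/\|X(\varphi_t(x))\|$: by Lemma~\ref{l.extend}, $(x,y)\mapsto \|X(x)\|\cdot y$ extends continuously to $\widehat{TM}$ and vanishes on $p^{-1}(\mathrm{Sing}(X))$, so the integrand extends as a $C^{k-1}$ fibered map, $C^k$ along each fiber; at $x=(0,u)$ it collapses to $D\varphi_t(\sigma)\cdot y$.

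For the translation term, I would decompose
$$\frac{\varphi_t(x)-x}{\|X(x)\|}=\frac{\exp_\sigma^{-1}(\varphi_t(x))}{\|X(\varphi_t(x))\|}\cdot\frac{\|X(\varphi_t(x))\|}{\|X(x)\|}-\frac{\exp_\sigma^{-1}(x)}{\|X(x)\|},$$
and invoke the extensions already at hand: Proposition~\ref{p.extended-field} gives $\exp_\sigma^{-1}(z)/\|X(z)\|\to u'/\|DX(\sigma)\cdot u'\|$ at any $u'\in PT_\sigma M$, Lemma~\ref{l.extend} gives $\|X(\varphi_t(x))\|/\|X(x)\|\to \|DX(\sigma)\circ D\varphi_t(\sigma)\cdot u\|/\|DX(\sigma)\cdot u\|$, and the extended flow sends $u$ to $\hat u_t:=D\varphi_t(\sigma)\cdot u/\|D\varphi_t(\sigma)\cdot u\|$. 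A short algebraic simplification then yields that the first product tends to $D\varphi_t(\sigma)\cdot u/\|DX(\sigma)\cdot u\|$; subtracting $u/\|DX(\sigma)\cdot u\|$ produces the translation $(D\varphi_t(\sigma)\cdot u-u)/\|DX(\sigma)\cdot u\|$ predicted by~\eqref{e.ext-L0}.

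Finally, $C^k$ smoothness in $y$ and continuity in $x$ for the $C^k$ topology follow as in Proposition~\ref{p.compactify-lifted}: only the linear piece contributes to $y$-derivatives, and the $k$-th fiber derivative equals $\|X(x)\|^{k}\int_0^1 D^k\varphi_t(x+r\|X(x)\|y)\,dr$, which extends continuously and vanishes on $p^{-1}(\mathrm{Sing}(X))$ for $k>1$. The main obstacle I anticipate is the bookkeeping for the simplification of the translation term and verifying uniformity in $x$ of the convergence on the ball $B(0_x,\beta)$; no new analytic input beyond Proposition~\ref{p.extended-field} and Lemma~\ref{l.extend} should be required.
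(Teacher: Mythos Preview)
Your approach is essentially the paper's: the same splitting into the integral piece and the translation $\frac{\varphi_t(x)-x}{\|X(x)\|}$, with the latter extended via Proposition~\ref{p.extended-field} and Lemma~\ref{l.extend}. Your explicit factorization of the translation term is a clean way to make the paper's phrase ``arguing as in Proposition~\ref{p.extended-field} and Lemma~\ref{l.extend}'' precise, and the algebraic simplification you describe is correct.

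Two small points to tighten. First, your formula for the $k$-th fiber derivative is off by one power: differentiating the closed form $\|X(x)\|^{-1}\bigl(\varphi_t(x+\|X(x)\|y)-\varphi_t(x)\bigr)$ gives $\|X(x)\|^{k-1}D^k\varphi_t(x+\|X(x)\|y)$, not $\|X(x)\|^{k}\int_0^1\cdots$; your conclusion (vanishing for $k>1$, limit $D\varphi_t(\sigma)$ for $k=1$) is unaffected. Second, the statement asks for a $C^k$ map on $\RR\times\widehat{TM}_x$, i.e.\ jointly in $(t,y)$, and you only discuss $y$-derivatives; the paper handles the $t$-derivative by noting that $\partial_t$ of the integral term is $\int_0^1 DX\bigl(\varphi_t(x+r\|X(x)\|y)\bigr)\cdot y\,dr$, which converges in $C^{k-1}$ to $DX(\varphi_t(\sigma))\cdot y$ since $X$ is $C^k$. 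You should include this step.
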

\begin{proof}
In the local coordinates the flow acts on $B(0_x,\beta)\subset T_xM$ as:
\begin{align}\label{e.L0}
\cL_0\varphi_t^*(y)&=\|X(x)\|^{-1}\left(\varphi_t(x+\|X(x)\|\cdot y)-x \right)\\
&= \int_0^1D\varphi_t(x+r\|X(x)\|\cdot y)\cdot y\;dr \; +\;\frac{\varphi_t(x)-x}{\|X(x)\|}.\label{e.extend-L0}
\end{align}
Arguing as in  Proposition~\ref{p.extended-field} and Lemma~\ref{l.extend},
{for each $t$, the map $x\mapsto \frac{\varphi_t(x)-x}{\|X(x)\|}$}
with $x\neq \sigma$ close to $\sigma$ extends for the $C^0$-topology
 by $\|DX(\sigma)\cdot u\|^{-1} (D\varphi_t(\sigma)\cdot u-u)$ at points $(0,u)\in PT_\sigma M$.
Since $X$ is $C^k$, these maps are all $C^k$ and depends continuously
with $x$ for the $C^k$ topology.

As before, the integral 
$\int_0^1D\varphi_t(x+r\|X(x)\|\cdot y)\cdot y\;dr$ extends as $D\varphi_t(\sigma)\cdot y$
at $p^{-1}(\sigma)$.
For each $x$, the map $(t,y)\mapsto\int_0^1D\varphi_t(x+r\|X(x)\|\cdot y)\cdot y\;dr$
is $C^k$ (this is checked on the formulas, considering separately the cases $x\in M\setminus  {\rm Sing}(X)$
and $x\in p^{-1}( {\rm Sing}(X))$. Since $X$ is $C^k$, this map depends continuously on $x$
for the $C^{k-1}$-topology.
The $k^\text{th}$ derivative with respect to $y$ is continuous in $x$, for the same reason as in the proof
of Proposition~\ref{p.compactify-lifted}.
For $x\in M\setminus  {\rm Sing}(X)$, the derivative with respect to $t$
of the map above is $(t,y)\mapsto\int_0^1DX(\varphi_t(x+r\|X(x)\|\cdot y))\cdot y\;dr$
and it converges as $x\to \sigma$ towards $(x,y)\mapsto DX(\varphi_t(\sigma))\cdot y$
for the $C^{k-1}$-topology (again using that $X$ is $C^k$).
Hence the first term of~\eqref{e.extend-L0} is a $C^k$ function of $(t,y)$ which depends continuously on $x$
for the $C^k$ topology.

As in Proposition~\ref{p.compactify-lifted}, this proves that $(\cL_0\varphi_t^*)_{t\in \RR}$ extends as a local $C^{k}$ flow having the announced properties.
\end{proof}

\subsection{The \emph{extended rescaled non-linear Poincar\'e flow}}
We also obtain a compactification of the rescaled non-linear Poincar\'e flow $(\psi_t^*)_{t\in \RR}$
(and of the rescaled linear Poincar\'e flow $(\Psi^*_t)_{t\in \RR}$).
This implies the first part of our Main Theorem.
\begin{Theorem}\label{t.compactified2}
If $X$ is $C^k$, $k\geq 1$, and if
$DX(\sigma)$ is invertible at each singularity,
the rescaled non-linear Poincar\'e flow $(\psi_t^*)_{t\in \RR}$ extends as a local $C^k$ fibered flow
$(\widehat{\psi_t^*})_{t\in \RR}$ on a neighborhood of the $0$-section in $\widehat {\cN}$.
Moreover, there exists $\beta'>0$ such that for each $t\in [0,1]$, $\sigma\in {\rm Sing}(X)$ and
$x=(0,u)\in p^{-1}(\sigma)$,
on the ball $B(0_x,\beta')\subset {\widehat {{\cN}_{x}}}$ the map $\widehat{\psi^*_t}$
writes as:
$$y\mapsto \;\frac{\|DX(\sigma)\cdot u\|}{\|DX(\sigma)\circ D\varphi_t(\sigma)\cdot u\|}D\varphi_{t+\tau_x}(\sigma)\cdot y\;
+\;\frac{D\varphi_{t+\tau_x}(\sigma)\cdot u-D\varphi_{t}(\sigma)\cdot u}{\|DX(\sigma)\circ D\varphi_t(\sigma)\cdot u\|},$$
where $\tau_x$ is a $C^{k}$ function of $(t,y)\in [0,1]\times B(0,\beta')$
which depends continuously on $x$ for the $C^k$ topology,
such that $\tau_x(t,0_x)=0$.
\smallskip

As a consequence, the rescaled linear Poincar\'e flow $(\Psi_t^*)_{t\in \RR}$ extends as a continuous
linear flow $(\widehat{\Psi_t^*})_{t\in \RR}$: for each $x\in\widehat M$, each $t\in\RR$ and each $v\in\widehat {\cN_{x}}$
the image $\widehat{\Psi_t^*}\cdot v$ coincides with the normal projection of $D\varphi_t^*\cdot v\in {\widehat {TM}_{x'} }$ on $\widehat{\cN_{x'}}$, where $x'=\widehat\varphi_t({x})$.
\end{Theorem}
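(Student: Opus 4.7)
The plan is to introduce a two-parameter version of the rescaled lifted flow, extend it to $\wh{TM}$, and then fix the time parameter via the implicit function theorem so that the image lies in $\wh\cN$.

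\textbf{Step 1.} For $x\in M\setminus\Sing(X)$ and $s,t$ close, define
$$\tilde{\cL}^{t,s}(y):=\|X(\varphi_t(x))\|^{-1}\exp_{\varphi_t(x)}^{-1}\bigl(\varphi_s(\exp_x(\|X(x)\|\cdot y))\bigr),$$
on a ball $B(0,\beta)\subset T_xM$. When $s=t$ this is the rescaled lifted flow $\cL\varphi_t^*(y)$. Moreover, by Lemma~\ref{l.project} and the transversality of $\cN$ to the flow, $\psi_t^*(y)=\tilde{\cL}^{t,s}(y)$ for the unique $s=t+\tau(t,y)$ close to $t$ such that the right-hand side lies in $\cN_{\varphi_t(x)}$.

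\textbf{Step 2 (extension of $\tilde{\cL}^{t,s}$).} In local coordinates near a singularity $\sigma$, one decomposes
$$\tilde{\cL}^{t,s}(y)=\frac{\varphi_s(x)-\varphi_t(x)}{\|X(\varphi_t(x))\|}+\frac{\|X(x)\|}{\|X(\varphi_t(x))\|}\int_0^1 D\varphi_s\bigl(x+r\|X(x)\|\cdot y\bigr)\cdot y\,dr.$$
The second summand extends continuously to $\wh{TM}$ by the argument of Proposition~\ref{p.compactify-lifted}, with limit $\tfrac{\|DX(\sigma)u\|}{\|DX(\sigma)\circ D\varphi_s(\sigma)u\|}D\varphi_s(\sigma)\cdot y$ at $x=(0,u)\in p^{-1}(\sigma)$. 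For the first (purely translational) summand, writing $\varphi_s(x)-\varphi_t(x)=\int_t^s X(\varphi_r(x))\,dr$ and using that $X(\varphi_r(x))$ is asymptotic to $DX(\sigma)\circ D\varphi_r(\sigma)\cdot\exp_\sigma^{-1}(x)$ as $x\to\sigma$ (as in the proof of Proposition~\ref{p.extended-field}), one obtains a ratio of quantities of the same order, which extends continuously with limit $\|DX(\sigma)\circ D\varphi_t(\sigma)u\|^{-1}\bigl(D\varphi_s(\sigma)u-D\varphi_t(\sigma)u\bigr)$. Joint $C^k$ dependence on $(t,s,y)$ and continuity of the $C^k$ jet in $x\in\wh M$ follow by differentiating under the integral sign, exactly as in Proposition~\ref{p.compactify-lifted}.

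\textbf{Step 3 (implicit function theorem).} The condition $\tilde{\cL}^{t,s}(y)\in\wh\cN_{\wh\varphi_t(x)}$ is equivalent to
$$F(x,t,s,y):=\bigl\langle\,\wh{\tilde\cL}^{t,s}(y),\;\wh X_1(\wh\varphi_t(x))\,\bigr\rangle=0,$$
where $\wh X_1$ is the non-vanishing $C^{k-1}$ unit section of Proposition~\ref{p.extended-field}. A direct calculation from the explicit limits of Step 2 gives $F(x,t,t,0)=0$ and $\partial_s F(x,t,t,0)=\|\wh X_1\|^2=1$ uniformly in $x\in\wh M$. The implicit function theorem then yields a $C^k$ function $\tau_x(t,y)$ with $\tau_x(t,0)=0$ solving $F=0$, depending continuously on $x$ for the $C^k$ topology. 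Setting $\wh{\psi_t^*}(y):=\wh{\tilde\cL}^{t,t+\tau_x(t,y)}(y)$ and substituting into Step 2 yields the explicit formula in the statement. The local flow property extends from the regular set by density, and the extension to all $t\in\RR$ is performed as at the end of Proposition~\ref{p.compactify-lifted}. Finally, the rescaled linear Poincaré flow $\wh{\Psi^*_t}$ is obtained by differentiating $\wh{\psi^*_t}$ at the $0$-section, equivalently as $\wh Q\circ\wh{\Phi^*_t}$ restricted to $\wh\cN$.

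\textbf{Main obstacle.} The principal technical point is establishing continuous extension of $\tilde{\cL}^{t,s}$ across $p^{-1}(\Sing X)$ with $C^k$ fibered regularity and verifying $\partial_s F$ is bounded away from zero uniformly up to the blowup so that the implicit function theorem produces $\tau_x$ with the required regularity. Both rely crucially on the non-vanishing of $\wh X_1$ provided by Proposition~\ref{p.extended-field}, which itself depends on the standing hypothesis that $DX(\sigma)$ is invertible at each singularity.
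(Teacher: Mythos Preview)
Your proposal is correct and follows essentially the same route as the paper. The paper writes the rescaled Poincar\'e map as the composition $\cL_0\varphi^*_\tau\circ\cL\varphi^*_t$ of two separately extended flows and applies the implicit function theorem to $\Theta(x,t,y,\tau)=\langle \cL_0\varphi^*_\tau\circ\cL\varphi^*_t(y),\widehat X_1(\widehat\varphi_t(x))\rangle$; your two-parameter family satisfies $\tilde\cL^{t,t+\tau}=\cL_0\varphi^*_\tau\circ\cL\varphi^*_t$, so the two arguments are the same up to packaging. One small slip: in Step~2 the rescaling factor in front of the integral is $\|X(x)\|/\|X(\varphi_t(x))\|$, so the limit of the second summand at $x=(0,u)$ should have $\|DX(\sigma)\circ D\varphi_t(\sigma)u\|$ (not $D\varphi_s$) in the denominator, matching the formula in the statement.
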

\begin{proof}
For each singularity $\sigma$, we work with the local coordinates $(-\varepsilon, \varepsilon)\times T^1_\sigma M$
and prove that the rescaled non-linear Poincar\'e flow extends as a local $C^k$ fibered flow.
Since the rescaled non-linear Poincar\'e flow is invariant by the symmetry $(s,u)\mapsto (-s,-u)$,
this implies the result above a neighborhood of $p^{-1}(\sigma)$ in $\widehat M$,
and hence above the whole manifold $\widehat M$.

The image of $y\in B(0,\beta)\subset \cN_x$ by
the time $t$ of the rescaled non-linear Poincar\'e flow is the unique point of the curve in $T_{\varphi_t(x)}M$
$$\tau\mapsto \cL_0\varphi_\tau^*\circ \cL\varphi_t^*(y)$$
which belongs to $\cN_{\varphi_t(x)}$.
In the local coordinates $x=(s,u)\in (-\varepsilon, \varepsilon)\times T^1_\sigma M$ near $\sigma$,
it corresponds to the unique value
$\tau$ such that the following function vanishes:
$$\Theta(x,t,y,\tau)=\bigg\langle \cL_0\varphi_\tau^*\circ \cL\varphi_t^*(y)\;,\;
\frac{X(\varphi_t(x))}{\|X(\varphi_t(x))\|}\bigg\rangle.$$

From the previous propositions the map $(y,\tau)\mapsto \Theta(x,t,y,\tau)$ is $C^k$ and depends continuously
on $(x,t)$ for the $C^{k}$-topology and is defined at any $x\in\widehat M$.
When $x=(0,u)\in p^{-1}(\sigma)$, the first part of the scalar product in $\Theta$ is given by the two propositions above.
According to Proposition~\ref{p.extended-field} the second part
$\widehat X_1(x)$
becomes to
$$\frac{DX(\sigma)\circ D\varphi_t(\sigma)\cdot u}{\|DX(\sigma)\circ D\varphi_t(\sigma)\cdot u\|}.$$

\begin{Claim}
For any $t\in[0,1]$ and for $x\in p^{-1}(\sigma)$, the derivative $\frac{\partial \Theta}{\partial \tau}\big|_{\tau=0}(x,t,0)$ is non-zero. 
\end{Claim}
\begin{proof}
Indeed, by the previous proposition this is equivalent to
$$\bigg\langle\frac{\partial}{\partial\tau}\bigg|_{\tau=0}(D\varphi_\tau(\sigma)\cdot v)\;,\;
\frac{DX(\sigma)\circ D\varphi_t(\sigma)\cdot u}{\|DX(\sigma)\circ D\varphi_t(\sigma)\cdot u\|}\bigg\rangle\neq 0,$$
where $v=\widehat \varphi_t\cdot u={D\varphi_t(\sigma)\cdot u}/{\|D\varphi_t(\sigma)\cdot u\|}$.
Thus the claim follows from the fact that
$\|DX(\sigma)\circ D\varphi_t(\sigma)\cdot u\|$ does not vanish.
\end{proof}

By the implicit function theorem and compactness, there exists $\beta'>0$ and, for each $x$, a $C^{k}$ map
$(y,t)\mapsto\tau_x(t,y)$ which depends continuously on $x$ for the $C^k$ topology such that
$$\Theta(x,t,y,\tau_x(t,y))=0,$$
for each $x\in\widehat M$ close to $PT_\sigma M$, each $t\in[0,1]$ and each $y\in B(0,\beta')$.
The rescaled non-linear Poincar\'e flow is thus locally given by the composition:
\begin{equation}\label{e.rescaledpoincare}
(x,t,y)\mapsto \cL_0\varphi_{\tau_x(t,y)}^*\circ \cL\varphi_t^*(y),
\end{equation}
which extends as a $C^{k}$ fibered flow. The formula at $x=(0,u)\in p^{-1}(\sigma)$
is obtained from the expressions in the previous propositions.
\medskip

We now compute the rescaled linear Poincar\'e flow as the tangent map to the rescaled non-linear Poincar\'e flow along the $0$-section. We fix $x\in\widehat M$ and its image
$x'=\widehat \varphi_t(x)$. We take $y\in \cN_{x}$
and its image $y'\in \widehat {TM}_{x'}$ by $\cL\varphi^*_t$.
Working in the local coordinates $(-\varepsilon,\varepsilon)\times T^1_\sigma M$
and using Proposition~\ref{p.extended-field} and formulas~\eqref{e.ext-L0} and~\eqref{e.L0} we get
$$\frac{\partial}{\partial \tau}\bigg|_{\tau=0}\cL_0\varphi^*_\tau(0_{x'})=\widehat X_1(x').$$
{Note that }$\tau(x',t,0)=0$. We have:
\begin{equation*}
\begin{split}
D\psi^*_t(0)&=
\left( \frac{\partial}{\partial y'}\bigg|_{y'=0}  \cL_0\varphi^*_0(y') \right)\circ\left( \frac{\partial}{\partial y}\bigg|_{y=0}\cL\varphi_t^*(y)\right)+
\frac{\partial\tau}{\partial y}\bigg|_{y=0}\left( \frac{\partial}{\partial \tau}\bigg|_{\tau=0}\cL_0\varphi^*_\tau(0_{x'})\right)\\
&= D\varphi_t^*(x)+\frac{\partial\tau}{\partial y}\bigg|_{y=0}\widehat X_1(x').
\end{split}
\end{equation*}

On the other hand from~\eqref{e.rescaledpoincare} and the definitions of $\Theta,\tau$ we have
$$\left< D\psi^*_t(0),\widehat X_1(x')\right>=0,$$
hence $\Psi^*_t=D\psi^*_t(0)$ coincides with the normal projection of $D\varphi_t^*(x)$
on the linear sub-space of $\widehat {TM}_{x'}$ orthogonal to $\widehat X_1(x')$, which is $\widehat {\cN}_{x'}$.
\end{proof}

\subsection{Proof of The Main Theorem (without the identifications)}\label{ss.first-part}
Let $\Lambda$ be the compact invariant set in the assumptions of the Main Theorem. Recall the blowup $p:~\widehat M\to M$. We define $\widehat\Lambda$ as the closure of $p^{-1}(\Lambda\setminus{\rm Sing}(X)))$ in $\widehat M$. Since the flow $\varphi$ on $M$ induces a flow $\widehat\varphi$ on $\widehat M$, we know that the restriction of $\varphi$ to $\Lambda\setminus{\rm Sing}(X)$ embeds in $({\widehat\Lambda},{\widehat\varphi})$ through the map $i=p^{-1}$.
The metric on the bundle $\widehat{TM}$ over $\widehat M$ is the pull back metric of $TM$
and the map $I$ induces an isometric isomorphism between $\cN|_{\Lambda\setminus{\rm Sing}(\Lambda)}$
and $\widehat{\cN}|_{p^{-1}(\Lambda\setminus{\rm Sing}(\Lambda))}$

Finally Theorem~\ref{t.compactified2} shows that the $C^k$ local fibered flow $(\widehat {\Psi^*_t})_{t\in \RR}$ is conjugated by $I$ near the zero-section to the rescaled non-linear Poincar\'e flow $(\Psi^*_t)_{t\in \RR}$.
\qed

\subsection{The application to the vector field over $M$}
To summarize what we have proved, one has the following theorem on $M$.
\begin{Theorem}\label{t.non-degeneracy}
Assume that $X$ is a $C^k$ vector field over $M$, $k\geq 1$, such that $DX(\sigma)$ is invertible for any singularity $\sigma$.  Then given $t,\varepsilon>0$, there is $\delta,\beta>0$ such that for any regular points $x,y\in M$, if $\RR.X(x)$ and $\RR.X(y)$ are $\delta$-close, then $\psi_{t,x}^*|_{B(0_x,\beta)}$ and $\psi_{t,y}^*|_{B(0_y,\beta)}$ are $\varepsilon$-close in the $C^k$ topology.
\end{Theorem}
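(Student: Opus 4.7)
The plan is to read the statement as a direct compactness-continuity consequence of Theorem~\ref{t.compactified2}. Since $DX(\sigma)$ is invertible at each singularity, Theorem~\ref{t.compactified2} provides a local $C^k$ fibered flow $(\widehat{\psi^*_t})_{t\in\RR}$ on a neighborhood of the $0$-section in the extended normal bundle $\widehat{\cN}\to \widehat M$, conjugated to $(\psi^*_t)_{t\in\RR}$ on the regular set through the isometric isomorphism $I$. By definition of a $C^k$ fibered flow, for each fixed $t$ the map $\widehat{\psi^*_t}$ depends continuously on the base point in the $C^k$ topology.

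The key identification is the one recalled in Section~\ref{s.blow-up}: the map $\widetilde{\text{\it \i}}\colon x\mapsto (x,\RR\cdot X(x))$ embeds $M\setminus \Sing(X)$ into $PTM$, and the closure of its image coincides (via the Nash blowup description) with $\widehat M$. In particular, for two regular points $x,y\in M$, the assumption that $\RR\cdot X(x)$ and $\RR\cdot X(y)$ are $\delta$-close in $PTM$ translates exactly to $i(x)$ and $i(y)$ being $\delta$-close in the compact metric space $\widehat M$.

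From here, I would apply uniform continuity on the compact set $\widehat M$. Fix $t>0$ and $\varepsilon>0$. Choose $\beta>0$ smaller than the radius given by Theorem~\ref{t.compactified2} on which the formula for $\widehat{\psi^*_t}$ is valid. Using continuous local trivializations of $\widehat{\cN}$ (combined, near singularities, with the double-cover charts $(-\varepsilon,\varepsilon)\times T^1_\sigma M$), the family of $C^k$ maps $\widehat{\psi^*_{t,z}}|_{B(0_z,\beta)}$ indexed by $z\in \widehat M$ is continuous in the $C^k$ topology on the base, hence \emph{uniformly} continuous. This produces $\delta>0$ such that $z,z'\in \widehat M$ with $d(z,z')<\delta$ yield $\widehat{\psi^*_{t,z}}$ and $\widehat{\psi^*_{t,z'}}$ that are $\varepsilon$-close in $C^k$. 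Applied to $z=i(x)$, $z'=i(y)$, the conjugacy by the isometric isomorphism $I$ transports this $C^k$ closeness back to the $\psi^*_{t,x}|_{B(0_x,\beta)}$ and $\psi^*_{t,y}|_{B(0_y,\beta)}$, as required.

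The main obstacle is not an inequality but rather a bookkeeping point: making sure that ``$\varepsilon$-close in the $C^k$ topology'' is interpreted consistently for maps with different domains $B(0_x,\beta)\subset \cN_x$ and $B(0_y,\beta)\subset \cN_y$, and that this coincides, via $I$ and the local trivializations of $\widehat{\cN}$, with the natural $C^k$ topology on the family $\widehat{\psi^*_{t,z}}|_{B(0_z,\beta)}$. Once one fixes continuous local trivializations of $\widehat{\cN}$ on a finite cover of $\widehat M$ and checks that $I$ respects them up to the isometries on fibers, the result reduces to uniform continuity of a continuous map from a compact space into a space of $C^k$ maps.
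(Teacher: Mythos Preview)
Your proposal is correct and follows essentially the same approach as the paper: the paper's proof is a single sentence noting that $\RR\cdot X(x)$ and $\RR\cdot X(y)$ being $\delta$-close forces the corresponding points in $\widehat M$ to be $C\delta$-close, after which the result is implicit from the $C^k$-fibered continuity of $(\widehat{\psi^*_t})$ in Theorem~\ref{t.compactified2}. Your write-up simply spells out the compactness/uniform-continuity step and the bookkeeping with $I$ and local trivializations that the paper leaves to the reader.
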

Indeed if for any regular $x,y\in M$, the directions $\RR.X(x)$ and $\RR.X(y)$ are $\delta$-close,
then the corresponding points in the blownup manifold $\widehat M$ are $C\delta$ close for some uniform $C$.

\section{Identifications}\label{s.identifications}
Identifications (see Definition~\ref{Def:identification}) provide a structure
on the local fibered flows which formalizes properties
satisfied by the rescaled non-linear Poincar\'e flow.
We introduce here an additional property (the compatibility) which will
be checked on the extended rescaled non-linear Poincar\'e flow in Section~\ref{Existence}.
We also discuss some consequences.

In the whole section we consider a $C^k$ local fibered flow
$(\psi_t)_{t\in\RR}$ on a vector bundle $\cN\to K$
that fibers over a topological flow $(\varphi_t)_{t\in \RR}$
on the compact metric space $K$. 

\subsection{Compatible identifications}
We have introduced in Definition~\ref{Def:identification}
the notion of $C^k$ identifications $\mathfrak{h}$ on an open set $U\subset K$:
these are continuous families of $C^k$ diffeomorphisms $\mathfrak{h}_{y,x}\colon \cN_y\to \cN_x$ indexed by pairs of points $x,y \in K$, which satisfy
the relation~\eqref{e.identification} on balls $B(0_x,\beta_0)\subset \cN_x$
when $x,y,z$ are $r_0$-close points in $U$.
In particular $\mathfrak{h}_{x,x}$ coincides with the identity on the ball $B(0_x,\beta_0)\subset \cN_x$.
\medskip

\noindent
{\bf Notations.} -- In order to simplify the presentation, we will sometimes denote $\mathfrak{h}_{y,x}$
by $\mathfrak{h}_x$. Also the projection $\mathfrak{h}_{y,x}(0)=\mathfrak{h}_x(0_y)$ of $0_y\in \cN_y$ on $\cN_x$ will be denoted
by $\mathfrak{h}_x(y)$.
\medskip

We will denote by ${\rm Lip}$ be the set of orientation-preserving bi-Lipschitz homeomorphisms $\theta$ of $\RR$
(and by ${\rm Lip}_{1+\rho}$ the set of maps in ${\rm Lip}$ whose Lipschitz constant is smaller than $1+\rho$).

\begin{Definition}\label{d.compatible}
The identification $\mathfrak{h}$ on $U$ is \emph{compatible} with the local fibered flow $(\psi_t)$ if:
\begin{itemize}

\item[--] \emph{No small period.}
For any $\varkappa>0$, there is $r>0$ such that for any $x\in \overline{U}$
and $t\in [-2,2]$ with $d(x,\varphi_t(x))<r$, then we have $|t|< \varkappa$.

\item[--] \emph{Local injectivity.} For any $\delta>0$, there exists $\beta>0$ such that for any $x,y\in U$:\\
if $d(x,y)<r_0$ and $\|\mathfrak{h}_{x}(y)\|\leq \beta$, then $d(\varphi_t(y),x)\leq \delta$ for some $t\in [-1/4,1/4]$.

\item[--] \emph{Local invariance.} For any $x,y\in U$
and $t\in [-2,2]$ such that $y$ and $\varphi_t(y)$ are $r_0$-close to $x$,
and for any $u\in B(0,\beta_0)\subset \cN_{y}$, we have
$$\mathfrak{h}_{x}\circ \psi_t(u)=\mathfrak{h}_{x}(u).$$

\item[--] \emph{Global invariance.}
For any $\delta,\rho>0$, there exist $r,\beta>0$ with the following properties:

{\rm 1.} For any $y,y'\in U$ with $d(y,y')<r$,
for any $u\in  \cN_y$, $u'\in \cN_{y'}$ with $\mathfrak{h}_y(u')=u$,
and for any intervals $I,I'\subset \RR$ containing $0$ and satisfying
$$\|\psi_t(u)\|<\beta\text{ and } \|\psi_{t'}(u')\|<\beta\text{ for any } t\in I \text{ and any }
t'\in I',$$
there is $\theta\in {\rm Lip}_{1+\rho}$ such that $\theta(0)=0$ and
 $d(\varphi_t(y),\varphi_{\theta(t)}(y'))<\delta$ for any $t\in I\cap \theta^{-1}(I')$.\\
{\rm 2.} In addition, if some $v\in \cN_y$ satisfies
$[\forall t\in I\cap \theta^{-1}(I'),\; \|\psi_t(v)\|<\beta]$, then:
\begin{itemize}
\item[\rm 2.i)] $v':=\mathfrak{h}_{y'}(v)$ satisfies
$[\forall t\in I\cap \theta^{-1}(I'),\; \|\psi_{\theta(t)}(v')\|<\delta]$;
\item[\rm 2.ii)] if $\varphi_t(y)\in U$
for some $t\in I\cap \theta^{-1}(I')$, then $\mathfrak{h}_{\varphi_t(y)}\circ \psi_{\theta(t)}(v')=\psi_t(v)$.
\end{itemize}
\end{itemize}
\end{Definition}

\begin{Remarks-numbered}\label{r.identification}\rm
a) These definitions are still satisfied if one reduces
$r_0$ or $\beta_0$. Their values may be reduced in the following sections in order to
satisfy additional properties.
\medskip

\noindent
b) One can rescale the time and keep a compatible identification: the flow $s\mapsto \varphi_{C\cdot s}$ for $C>1$
still satisfies the definitions above, maybe after reducing the constant $r_0$.

The main point to check is that the time in the Local injectivity can still
be chosen in $[-1/4,1/4]$. Indeed, this is ensured by the ``No small period'' Property applied with $\varkappa=1/(4C)$: if $r_0$ is chosen smaller
and if $d(\varphi_t(y),x),d(y,x)$ are both less than $r_0$ for $t\in [-1/4,1/4]$, then $|t|$ is smaller than $\varkappa$.
Now the time in the Local injectivity property belongs to $[-\varkappa,\varkappa]$ for the initial flow, hence to $[-1/4,1/4]$ for the time-rescaled flow.
\medskip

\noindent
c) Applying a rescaling as discussed in the item (b),
one can replace the interval $[-2,2]$ in the ``No small period'' Property and in the Local invariance
by any larger interval $[-L,L]$; one can replace the interval $[-1/4,1/4]$ in the local injectivity
by any small interval $[-\eta,\eta]$.
\medskip

\noindent
d) The ``No small period'' Property (which does not involve the projections $\mathfrak{h}_x$) is equivalent to the non-existence of periodic orbits of period $\leq 2$
which intersect $\overline U$.
In particular, by reducing $r_0$, one can assume the following property:
\smallskip

\emph{For any $x\in U$ and any $t\in [1,2]$, we have $d(x,\varphi_t(x))\ge r_0$.}
\medskip

\noindent
e) For $x\in U$, the Local injectivity prevents the existence of  $y\in U$ that is $r_0$-close to $x$,
is different from $\varphi_t(x)$ for any $t\in [-1/4,1/4]$, and
such that $\mathfrak{h}_x(y)=0_x$.
In particular:
\smallskip

\emph{If $x,\varphi_t(x)\in U$ satisfy $d(x,\varphi_t(x))<r_0$, $t\not\in (-1/2,1/2)$ and
$\mathfrak{h}_x(\varphi_t(x))=0_x$, then $x$ is periodic.}\hspace{-1cm}\mbox{}
\medskip

\noindent
f) The Global invariance says that when two orbits $(\psi_t(u))_{t\in I}$ and
$(\psi_t(u'))_{t\in I'}$ of the local fibered flow are close to
the $0$ section of $\cN$ and have two points which are identified by $\mathfrak{h}$,
then they are associated to orbits of the flow $(\varphi)_{t\in \RR}$ that are close
(up to a reparametrization $\theta$).
In this case, pieces of orbit of $(\psi_t)_{t\in \RR}$ close to the zero-section
above the first $\varphi$-orbit can be projected to an orbit of $(\psi_t)_{t\in \RR}$ above the second $\varphi$-orbit.
\medskip

\noindent
g) The Global invariance can be applied to pairs of points $y,y'$ where the condition
$d(y,y')<r$ has been replaced by a weaker one $d(y,y')<r_0$. More precisely, the following holds:
\medskip

\emph{For any $\delta,\rho>0$, there is $\beta>0$ as follows:
for any $y,y'\in U$ with $d(y,y')<r_0$, for any $u\in  \cN_y$, $u'\in \cN_{y'}$ with $\mathfrak{h}_y(u')=u$
and for any the intervals $I,I'\subset \RR$ containing $0$ and satisfying:
$$\|\psi_t(u)\|<\beta\text{ and } \|\psi_{t'}(u')\|<\beta\text{ for any } t\in I \text{ and any }
s'\in I',$$
then there is $\theta\in {\rm Lip}_{1+\rho}$ such that $|\theta(0)|\leq 1/4$ and $d(\varphi_t(y),\varphi_{\theta(t)}(y'))<\delta$ for any $t\in I\cap \theta^{-1}(I')$.}
\medskip

Indeed provided that $\beta>0$ has been chosen small enough,
one can apply the Local injectivity and the Local invariance
in order to replace $y'$ and $u'$ by $y''=\varphi_s(y')$ and $u''=\psi_s(u')$
for some $s\in [-1/4,1/4]$ such that $d(y,y'')<r$.
The assumptions for the Global invariance then are satisfied by $y,y''$ and $u,u''$.
It gives a  $\theta\in {\rm Lip}_{1+\rho}$
satisfying $d(\varphi_t(y),\varphi_{\theta(t)}(y'))<\delta$ for $t\in I\cap \theta^{-1}(I')$
but the condition $\theta(0)=0$ has been replaced by
$\theta(0)=s$; in particular $|\theta(0)|<1/4$.
\end{Remarks-numbered}

\subsection{No shear inside orbits}
The next property states that along reparametrizations of orbits of the flow $(\varphi_t)_{t\in\RR}$,
the time cannot fluctuate widely. This will be useful for applications announced in~\cite{CY1}.

\begin{Proposition}\label{p.no-shear}
Let us assume that there exists a $C^k$ identification on an open set $U\subset K$
which is compatible with the local fibered flow $(\psi_t)_{t\in\RR}$
and that the set $\Delta:=\{x| \forall |t|\leq \tfrac 1 2,\; \varphi_t(x)\notin U\}$
is disjoint from $\overline U$. If $r_0>0$ is small, then
for any $x\in U$, any increasing homeomorphism $\theta$ of $\RR$, any closed interval $I$ containing $0$ satisfying $[d(\varphi_t(x),\varphi_{\theta(t)}(x))\leq r_0,\;\forall t\in I]$ and $\varphi_{\theta(0)}(x)\in U$,
the following holds:
\begin{itemize}
\item[--] If $\theta(0)> 1/2$, then $\theta(t)> t+2$, $\forall t\in I$ when $\varphi_t(x), \varphi_{\theta(t)}(x)\in U$;
\item[--] If $\theta(0)\in [-2,2]$, then $\theta(t) \in [t-1/2,t+1/2]$, $\forall t\in I$ when $\varphi_t(x),\varphi_{\theta(t)}(x)\in U$;
\item[--] If $\theta(0)< -1/2$, then $\theta(t)<t-2$, $\forall t\in I$ when $\varphi_t(x),\varphi_{\theta(t)}(x)\in U$.
\end{itemize}
\end{Proposition}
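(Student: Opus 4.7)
The plan is to work with the continuous function $\eta(t) := \theta(t) - t$ on $I$ (continuity follows from $\theta$ being a homeomorphism of $\mathbb{R}$) and show it must remain in one of three disjoint regions of $\mathbb{R}$ along $A := \{t \in I : \varphi_t(x), \varphi_{\theta(t)}(x) \in U\}$. The hypothesis rewrites as $d(y_t, \varphi_{\eta(t)}(y_t)) \leq r_0$ with $y_t := \varphi_t(x)$, which is precisely the setting where the ``No small period'' property can be used.

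\emph{Trichotomy and starting value.} Fix $\varkappa < 1/2$ and apply the No small period property of the compatible identification with range $[-2,2]$ (see Remark~\ref{r.identification}(c)) to obtain a radius $r > 0$; shrink $r_0 \leq r$. Then for every $t \in I$ with $y_t \in \overline{U}$,
\[
 \eta(t) \in R_- \cup R_0 \cup R_+, \qquad R_- := (-\infty,-2),\ R_0 := (-\varkappa,\varkappa),\ R_+ := (2,\infty),
\]
three pairwise disjoint open intervals. Applied at $t=0$, where $y_0 = x \in U \subset \overline{U}$, this settles the initial value: in Case~1, $\theta(0) > 1/2 > \varkappa$ leaves only $R_+$ (so $\theta(0) > 2$); in Case~2, $\theta(0) \in [-2,2]$ leaves only $R_0$; in Case~3, $\theta(0) < -1/2$ leaves only $R_-$.

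\emph{Propagation along $A$.} Let $B' := \{t \in I : y_t \in \overline{U}\} \supset A$. Since the three regions are open and disjoint, the indicator $\sigma: B' \to \{-,0,+\}$ of which region $\eta$ falls into is locally constant on $B'$, hence constant on each connected component. It thus suffices to show that every $t \in A$ lies in the connected component of $B'$ containing $0$. Assume for contradiction some $t_1 \in A$ (WLOG $t_1 > 0$) has $\eta(t_1)$ in a different region. By continuity of $\eta$ and the intermediate value theorem, $\eta$ takes a boundary value in $\{\pm 2, \pm \varkappa\}$ at some $s^* \in (0,t_1)$; the trichotomy then forces $y_{s^*} \notin \overline{U}$. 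Let $a := \inf\{t \in [0,s^*] : y_t \notin \overline{U}\}$ and $b := \inf\{t \in [s^*, t_1] : y_t \in \overline{U}\}$; both lie in $B'$, with $\sigma(a)$ equal to the $t=0$ value and $\sigma(b)$ in the $t_1$ region.

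\emph{Bridging and main obstacle.} Invoking $\Delta \cap \overline{U} = \emptyset$ at $\varphi_a(x)$ and $\varphi_b(x)$ yields $s_a, s_b \in [-1/2,1/2]$ with $\varphi_{a+s_a}(x), \varphi_{b+s_b}(x) \in U \subset \overline{U}$. Because $y_t \notin \overline{U}$ for $t \in (a,b)$, one has either $s_a \leq 0$ (so $a+s_a$ stays in $a$'s component of $B'$) or $s_a \geq b-a$ (forcing $b-a \leq 1/2$ and placing $a+s_a$ in $b$'s component), and symmetrically for $s_b$. A case analysis on these signs, together with continuity of $\eta$ on the resulting short bridging intervals (possibly after a further shrinking of $r_0$ to keep $\varphi_{\theta(t)}(x)$ near the gap endpoints), produces a finite chain of points in $B'$ joining $a$ to $b$ along which $\sigma$ cannot change, contradicting $\sigma(a) \neq \sigma(b)$. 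The hard part is precisely this final bridging: without the hypothesis $\Delta \cap \overline{U} = \emptyset$ the orbit could dwell outside $\overline{U}$ long enough for $\eta$ to drift across a full region boundary invisibly, and the ``$1/2$-density'' of $U$ in $\overline{U}$ supplied by that hypothesis is exactly what forces any such traversal to register on $B'$, where the trichotomy forbids it.
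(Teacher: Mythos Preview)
Your trichotomy via the ``No small period'' property is correct, and the reduction to studying $\eta(t)=\theta(t)-t$ on the set $B'=\{t\in I:\varphi_t(x)\in\overline U\}$ is exactly the right framework. The problem is entirely in the final paragraph, where the actual argument is replaced by a vague ``case analysis'' and an appeal to ``continuity on short bridging intervals''. This step does not go through as stated, for two reasons.

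First, your definitions of $a$ and $b$ do not produce a single gap of $B'$. With $a:=\inf\{t\in[0,s^*]:y_t\notin\overline U\}$ you get $[0,a]\subset B'$, but the orbit may re-enter $\overline U$ many times on $(a,s^*)$; hence ``$y_t\notin\overline U$ for $t\in(a,b)$'' is false in general, and the claim that $\sigma(b)$ lies in the $t_1$-region is not justified (there may be further gaps between $b$ and $t_1$).

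Second, and more seriously, even if you pass to a genuine maximal gap $(a,b)$ of $B'$, the bridging fails. The hypothesis $\Delta\cap\overline U=\emptyset$ does \emph{not} force gaps to be short: it only guarantees $s_a\in[-1/2,1/2]$ with $y_{a+s_a}\in U$, and the case $s_a\le 0$, $s_b\ge 0$ places no constraint on $b-a$. Long gaps genuinely occur (the orbit may sit in $\Delta$ for a long time). And on a short gap there is still no control: since $\theta$ is only an increasing homeomorphism, $\eta(b)-\eta(a)$ is not bounded by $b-a$, so continuity alone cannot prevent $\eta$ from crossing a region boundary. The parenthetical ``further shrinking of $r_0$'' cannot help, since $r_0$ must be chosen uniformly before $x,\theta,I$.

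The paper's proof addresses exactly this obstruction, and the mechanism is different from yours. It fixes $t_1$ as the last time in $\overline U$ before $\eta$ drops, and $t_2$ as the first return to $\partial U$; the trichotomy at $t_1,t_2$ forces $t_2-t_1>2-\varepsilon$, so the orbit spends $[t_1+\tfrac12,t_2-\tfrac12]$ inside $\Delta$. The contradiction then comes not from bounding $b-a$, but from locating $\varphi_{\theta(t_2)}(x)$: this point must be $r_0$-close to $\partial U$, yet it lies on the \emph{same} orbit at some time in $(t_1,t_2)$, and a separate compactness property (short orbit segments in $K\setminus U$ with endpoints near $\Delta$ stay $r_0$-far from $\overline U$) forces it far from $\overline U$. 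You are missing both this geometric property and the observation that $\varphi_{\theta(t)}(x)$ being a time-shift of the same orbit constrains its location during the gap.
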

\begin{proof}
Let us take $r_0, \varepsilon>0$ small so that $\varphi_s(\overline U)$ and $\Delta$ are at distance larger than $r_0$ when $|s|\leq \varepsilon$. One also assume that:
\begin{itemize}
\item[a)] any piece of orbit
$\{\varphi_s(y),s\in [0,b]\}\subset K\setminus U$, with $y,\varphi_b(y)$ in the $r_0$-neighborhood of $\Delta$ and $b\leq 1/2$, is disjoint from the $r_0$-neighborhood of $\overline U$;
\item[b)] if $\varphi_s(y)$ is $r_0$-close to $y\in\overline U$ for $|s|\leq 2$, then $|s|\leq \varepsilon$.
\end{itemize}
The first condition is satisfied by small $r_0>0$ since
otherwise letting $r_0\to 0$ one would construct $y,\varphi_b(y)\in \Delta$
and $\varphi_{s}(y)\in \overline U$ where $0\leq s< b\leq 1/2$, contradicting the definition of $\Delta$.
The second condition is  a consequence of the ``No small period'' Property.

\begin{Lemma}
\begin{enumerate}
\item[(i)] If $\theta(0)\geq -2$ then $\theta(t)\geq t-1/2$ for any $t\in I$ satisfying $\varphi_t(x),\varphi_{\theta(t)}(x)\in U$.
\item[(ii)] If $\theta(0)\leq 2$ then $\theta(t)\leq t+1/2$ for any $t\in I$ satisfying $\varphi_t(x),\varphi_{\theta(t)}(x)\in U$.
\end{enumerate}
\end{Lemma}
\begin{proof}
The case $t=0$ is a consequence of the ``No small period'' Property.
We now deal with the assertion (i) in the case {that} $t$ is positive.

Assume by contradiction that there is $t_0>0$ in $I$ such that $\varphi_{t_0}(x)\in U$, but $\theta(t_0)<t_0-1/2$.
Since $\theta(0)\geq -1/2$, one can consider the maximal interval $J$ containing $0$
of parameters $t\in I$ satisfying either $\varphi_t(x)\not\in \overline U$,
or $\theta(t)\geq t-1/2$.

Let $t_1$ be the largest time in $[0,t_0]\cap J$
satisfying $\varphi_{t_1}(x)\in \overline U$. It exists since
$x=\varphi_0(x)\in \overline U$ and
if $(t_k)$ is an increasing sequence in $[0,t_0]\cap J$ satisfying
$\varphi_{t_k}(x)\in \overline U$, then we have $\theta(t_k)\geq t_k-1/2$.
So the limit $\overline t$ satisfies $\theta(\overline t)\geq \overline t-1/2$ (and belongs to $J$)
and $\varphi_{\overline t}(x)\in \overline U$.

By construction $\theta(t_1)\geq t_1-1/2$, hence $t_1<t_0$.
We claim that $\theta(t_1)\geq t_1-\varepsilon$.
Indeed, one can reduce to consider the case $\theta(t_1)\leq t_1+2$
(otherwise the property holds trivially). Then, Property (b) gives $|\theta(t_1)-t_1|\leq \varepsilon$
which implies the claim.

For $s>t_1$ close to $t_1$, we thus have $s\in J$. Since $t_1$ is maximal
we also get  $\varphi_{s}(x)\not\in \overline U$. Since $\varphi_{t_0}(x)\in U$,
there exists a minimal $t_2\in (t_1,t_0]$ such that $\varphi_{t_2}(x)$ belongs to the boundary of $U$.
In particular $\varphi_{\theta(t_2)}(x)$ is $r_0$-close to the boundary of $U$.
Note that $[t_1,t_2)\subset J$. By maximality of $t_1$ one has $\theta(t_2)<t_2-1/2$.

The ``No small period'' Property implies $\theta(t_2)<t_2-2$.
In particular,
$$t_2>\theta(t_2)+2>\theta(t_1)+2\geq t_1+2-\varepsilon.$$
Hence the interval $[t_1+1/2,t_2-1/2]$ is non-empty. By construction
any time $s$ in this interval satisfies $\varphi_{s+t}(x)\notin U$ for
all $|t|\leq 1/2$. One deduces $\varphi_{s}(x)\in \Delta$ for all $s\in [t_1+1/2,t_2-1/2]$.

Since $\varphi_{\theta(t_1+1/2)}(x)$ is $r_0$-close to $\varphi_{t_1+1/2}(x)\in \Delta$,
and $\varphi_{t_1}(x)\in\overline U$,
one has $|\theta(t_1+1/2)-t_1|> \varepsilon$ (by our choice of $\varepsilon$).
Since $\theta(t_1)\geq t_1-\varepsilon$, this gives
$\theta(t_1+1/2)> t_1+\varepsilon$.

We then distinguish two cases:
\begin{itemize}
\item[--] When $\theta(t_2)\in [t_1+1/2,t_2-1/2]$, then $\varphi_{\theta(t_2)}(x)\in \Delta$
is $r_0$ far from $\overline U$.
This is a contradiction since we have proved before that $\varphi_{\theta(t_2)}(x)$ is $r_0$-close to the boundary of $U$. 

\item[--] When $\theta(t_2)< t_1+1/2$, then $t_1+\varepsilon<\theta(t_1+1/2)<\theta(t_2)< t_1+1/2$.
In particular the interval $[\theta(t_1+1/2),t_1+1/2]$ has length smaller than $1/2$. Since
$\varphi_{\theta(t_1+1/2)}(x)$ belongs to the $r_0$-neighborhood of $\Delta$
and since $\varphi_{t_1+1/2}(x)\in \Delta$, the property (a) implies that
$\varphi_{\theta(t_2)}(x)$ is disjoint from the $r_0$-neighborhood of $\overline U$. This also gives a contradiction.
\end{itemize}
In both cases we found a contradiction. Hence the conclusion (i) of the lemma holds when $t$ is positive.
A symmetric argument gives the conclusion (ii) when $t$ is negative.
\medskip

Let us consider the parametrization of the orbit of $x'=\varphi_{\theta(0)}(x)\in U$
by the increasing homeomorphism $\theta'\colon s\mapsto \theta^{-1}(s+\theta(0))-\theta(0)$.
Note that $\varphi_{\theta'(0)}(x')=\varphi_{-\theta(0)}(x')=x$ belongs to $U$.
For any $s\in I':=\theta(I)-\theta(0)$,
by setting $t=\theta^{-1}(s+\theta(0))$ we get
$$d(\varphi_s(x'),\varphi_{\theta'(s)}(x'))=d(\varphi_{\theta(t)}(x),\varphi_t(x))<r_0.$$

Let us assume that $\theta(0)\geq -2$ and that some
negative $t\in I$ satisfies $\theta(t)\geq t-1/2$ and $\varphi_t(x)\in U$ as in assertion (i).
We then have $\theta'(0)=-\theta(0)\leq 2$.
We may apply assertion (ii) for the negative time $s=\theta(t)-\theta(0)$, for $x',\theta',I'$.
Indeed we get
$$\varphi_{\theta'(s)}(x')=\varphi_t(x)
\text{ and } \varphi_s(x')=\varphi_{\theta(t)}(x).$$
We thus conclude $\theta'(s)\leq s+1/2$.
The definitions of $s$ and $\theta'$ give $t\leq \theta(t)+1/2$.
The assertion (i) is now proved in all cases. The assertion (ii) is obtained by a symmetric argument.
\end{proof}

The previous lemma immediately implies the second item of the proposition.
The first and third item also follows by replacing $x$ by $x'$ and considering the time $t'=-t$.
The proposition is thus proved.
\end{proof}

\subsection{Closing properties}
The local fibered flow $(\psi_t)_{t\in\RR}$ can detect periodic points of $(\varphi_t)_{t\in\RR}$.
\begin{Proposition}\label{p.closing0}
Let us assume that there exists a $C^k$ identification on an open set $U\subset K$
which is compatible with the local fibered flow $(\psi_t)_{t\in\RR}$
and let $\beta_0,r_0>0$ be small enough.
Let us consider:
\begin{itemize}
\item[--] $x\in U$ and $T\geq 4$ with $\varphi_T(x) \in U\cap B(x,r_0)$,
\item[--] a fixed point $p\in \cN_x$ for $\overline \psi_T:=\mathfrak{h}_x\circ \psi_T$
such that $\|\psi_t(p)\|<\beta_0$ for each $t\in [0,T]$,
\item[--]  a sequence $(y_k)_{k\in\NN}$ in a compact set of $U\cap B(x,r_0/2)$
such that $\mathfrak{h}_x(y_k)$ converges to $p$.
\end{itemize}
Then, taking a subsequence, $(y_k)_{k\in\NN}$
converges to a periodic point $y\in K$
such that $\mathfrak{h}_x(y)=p$.

Moreover, if $T'$ denotes the period of $y$, then we have
$$D\psi_{T'}(0_y)= D\mathfrak{h}_{x}(0_y)^{-1}\circ D\overline \psi_T(p) \circ D\mathfrak{h}_x(0_y).$$
\end{Proposition}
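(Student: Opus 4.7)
The plan is to extract a limit $y$, then use Global invariance together with the fixed-point relation $\overline{\psi}_T(p)=p$ to derive a closed-loop identity relating the flow at $y$ to $\overline{\psi}_T$, and finally to invoke Local injectivity in its sharp form $\|\cdot\|=0$ to close the orbit. The main obstacle is passing from the vector-level identity produced by Global invariance to an \emph{exact} base-point identity; this is enabled by the algebraic input $\mathfrak{h}_{x,y}(p)=0_y$, which follows from $p=\mathfrak{h}_{y,x}(0_y)$ together with the transitivity relation $\mathfrak{h}_{x,y}\circ\mathfrak{h}_{y,x}=\mathrm{id}$.

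First, by compactness of the closure of $\{y_k\}$ in $U\cap B(x,r_0/2)$, I extract a subsequence $y_k\to y$; continuity of $\mathfrak{h}$ gives $\mathfrak{h}_x(y)=p$. Next, I apply Global invariance to the pair $(z_1,z_2)=(x,y_k)$ with $u_1=p$ and $u_2=w_k:=\mathfrak{h}_{x,y_k}(p)$. Transitivity yields $\mathfrak{h}_{y_k,x}(w_k)=p$, and for $\beta_0$ small enough (by continuity of $\psi$) the bound $\|\psi_t(w_k)\|<\beta$ holds on an interval slightly larger than $[0,T]$; one obtains reparametrizations $\theta_k\in\mathrm{Lip}_{1+\rho}$ with $\theta_k(0)=0$ and $d(\varphi_t(x),\varphi_{\theta_k(t)}(y_k))<\delta$ on $[0,T]\cap\theta_k^{-1}(I')$. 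For any $v\in\cN_x$ close to $p$, item 2.ii at $t=T$ (using $\varphi_T(x)\in U$) post-composed with $\mathfrak{h}_{\varphi_T(x),x}$ yields
\begin{equation*}
\mathfrak{h}_x\bigl(\psi_{\theta_k(T)}(\mathfrak{h}_{x,y_k}(v))\bigr)=\overline{\psi}_T(v).
\end{equation*}

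Second, passing to a subsequence with $\theta_k(T)\to T^*\in[T-\rho T,T+\rho T]$, joint continuity of $\psi$ and $\mathfrak{h}$ yields the limiting identity $\mathfrak{h}_{y',x}\circ\psi_{T^*}\circ\mathfrak{h}_{x,y}(v)=\overline{\psi}_T(v)$ for $v$ near $p$ in $\cN_x$, with $y':=\varphi_{T^*}(y)$. Setting $v=p$ and using $\mathfrak{h}_{x,y}(p)=0_y$ and $\psi_{T^*}(0_y)=0_{y'}$ collapses this to $\mathfrak{h}_x(y')=p$; a second use of transitivity then turns it into $\mathfrak{h}_y(y')=0_y$ exactly. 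Since $d(y,y')<r_0$, Local injectivity at $y$ applies with the trivially-bounded hypothesis $\|\mathfrak{h}_y(y')\|=0$; letting $\delta\to 0$ and extracting a convergent subsequence of the resulting times in $[-1/4,1/4]$ produces $s$ with $\varphi_s(y')=y$. Together with $y'=\varphi_{T^*}(y)$, this gives $\varphi_{T^*+s}(y)=y$, so $y$ is periodic of period $T':=T^*+s$.

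Third, for the derivative formula, Local invariance over the short time $s$ yields $\mathfrak{h}_{y',x}=\mathfrak{h}_{y,x}\circ\psi_{s}$ near $0_{y'}$; substituting into the limiting identity and collapsing times gives $\mathfrak{h}_{y,x}\circ\psi_{T'}\circ\mathfrak{h}_{x,y}=\overline{\psi}_T$ as germs near $p$. Differentiating at $p$, and using $D\mathfrak{h}_{x,y}(p)=[D\mathfrak{h}_{y,x}(0_y)]^{-1}$, $\psi_{T'}(0_y)=0_y$, and $\mathfrak{h}_{y,x}(0_y)=p$ yields exactly the stated conjugacy $D\psi_{T'}(0_y)=D\mathfrak{h}_x(0_y)^{-1}\circ D\overline{\psi}_T(p)\circ D\mathfrak{h}_x(0_y)$.
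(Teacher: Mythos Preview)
Your proof is correct and reaches the same conclusion, but the route differs from the paper's in two places worth noting.

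\textbf{Choice of $(u,u')$ in Global invariance.} The paper applies Global invariance with the pair $(u,u')=(\mathfrak{h}_x(y_k),\,0_{y_k})$: since $u'$ lies on the zero-section, the bound $\|\psi_{t'}(u')\|<\beta$ is automatic for all $t'$, so one can take $I'=\RR$ with no work. Your choice $(u,u')=(p,\,w_k)$ with $w_k=\mathfrak{h}_{x,y_k}(p)$ also works, but the justification you give (``for $\beta_0$ small enough, by continuity of $\psi$'') is slightly misstated: what actually makes $\|\psi_t(w_k)\|$ small on $[0,2T]$ is that $w_k\to\mathfrak{h}_{x,y}(p)=0_y$ as $k\to\infty$, hence $\psi_t(w_k)\to 0_{\varphi_t(y)}$ uniformly on compact time intervals. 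So the correct quantifier is ``for $k$ large enough'', independent of $\beta_0$. This is cosmetic, but the paper's choice avoids the issue entirely.

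\textbf{Closing the orbit and the derivative.} The paper stays at the sequence level: it shows $\mathfrak{h}_x(\varphi_{T_k}(y_k))\to p$, then uses Local injectivity to adjust $T_k$ so that $\varphi_{T_k}(y_k)\to y$, and reads off periodicity from the limit $T'=\lim T_k$. You instead pass to the limit in the \emph{identity} first, obtaining the germ equation $\mathfrak{h}_{y,x}\circ\psi_{T'}\circ\mathfrak{h}_{x,y}=\overline{\psi}_T$ near $p$, and then differentiate. Your approach yields more: a conjugacy of germs, not just of derivatives, whereas the paper's one-line invocation of Global invariance for the derivative formula is terse and implicitly uses the same mechanism you spell out. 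One small point: when you invoke Local injectivity at $(y,y')$ and transitivity for $\mathfrak{h}_y(y')$, you need $d(y,y')<r_0$ and $y'\in U$; these follow only after using the ``$r_0$ small enough'' clause in the proposition (the identification's $r_0$ must be allowed to exceed the proposition's $r_0$ by a fixed factor), which you should make explicit.
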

\begin{proof}
Up to extracting a subsequence, $(y_{k})$ converges to a point
$y\in U\cap B(x,r_0/2)$ such that $\mathfrak{h}_x(y)=p$.
By Global invariance, there exists $(T_k)$ satisfying
$\frac 1 2 T\leq T_k\leq 2T$ such that $\varphi_{T_k}(y_k)$
is in $B(x,r_0/2)$ and projects by $\mathfrak{h}_x$ on $\overline \psi_T(\mathfrak{h}_x(y_k))$.

In particular $(\mathfrak{h}_x\circ \varphi_{T_k}(y_k))$ converges to $p$
and $(\mathfrak{h}_y\circ \varphi_{T_k}(y_k))$ converges to $0_y$.
One deduces by Local injectivity (up to modifying $T_k$ by adding a real number in $[-1,1]$)
that $\varphi_{T_k}(y_k)$ converges to $y$.
Since $T\geq 4$, the limit value $T'$ of $T_k$ is larger than $1$ and
one deduces that $y$ is $T'$-periodic.
Since $\mathfrak{h}_x(y)=p$, by the Global invariance
$D\overline \psi_T(p)$ and $D\psi_{T'}(0_y)$ are conjugated by $D\mathfrak{h}_{x}(0_y)$.
\end{proof}

For the next statement, we consider an open set $V$ such that $K\subset U\cup V$.
We reduce $r_0>0$ so that $d(K\setminus U, K\setminus V) > r_0$.
\begin{Corollary}\label{c.closing0}
Let us assume that $\beta_0,r_0>0$ are small enough.
If $x\in K\setminus V$ has an iterate $y=\varphi_T(x)$ in $B(x,r_0)$ with $T\geq 4$ and if there exists
a subset $B\subset \cN_x$ containing $0_x$ such that:
\begin{itemize}
\item[--] $\psi_t(B)\subset B(0_{\varphi_t(x)},\beta_0)$ for any $0<t<T$,
\item[--] $\overline \psi_T:=\mathfrak{h}_x\circ \psi_T$ sends $B$ into itself,
\item[--] the sequence $\overline \psi_T^k(0_x)$ converges to a fixed point $p\in B$ of $\overline \psi$,
\end{itemize}
then the positive orbit of $x$ by $\varphi$ converges to a periodic orbit of the flow $(\varphi_t)_{t\in\RR}$.
\end{Corollary}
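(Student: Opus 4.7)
We deduce the corollary from Proposition~\ref{p.closing0}: it suffices to construct a sequence $(y_k)_{k\in\NN}$ lying in a compact subset of $U\cap B(x,r_0/2)$ with $\mathfrak{h}_x(y_k)\to p$. We will take $y_k=\varphi_{T_k}(x)$ for inductively chosen times $T_k\to\infty$ with uniformly bounded gaps $T_{k+1}-T_k$, so that convergence of the entire positive orbit of $x$ to a periodic orbit will follow from the convergence of $(y_k)$ by continuous dependence of the flow.

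\paragraph{Inductive construction.}
Set $p_k:=\overline\psi_T^k(0_x)\in B$, $T_1:=T$, $y_1:=y=\varphi_T(x)$; then $\mathfrak{h}_x(y_1)=\mathfrak{h}_x(\psi_T(0_x))=\mathfrak{h}_x(0_y)=p_1$. Assume inductively that $y_k=\varphi_{T_k}(x)\in U\cap B(x,r_0/2)$ and $\mathfrak{h}_x(y_k)=p_k$. Apply the Global invariance to the pair $(x,y_k)$ with $u:=p_k\in\cN_x$ and $u':=0_{y_k}$: the compatibility $\mathfrak{h}_x(u')=p_k=u$ comes from the inductive hypothesis, while the norm bounds on $I=[0,T]$ and $I'=\RR$ reduce to the assumed confinement $\psi_t(B)\subset B(0,\beta_0)\subset B(0,\beta)$ (after taking $\beta_0\le\beta$) and to $\psi_t(0_{y_k})\equiv 0$. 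This produces $\theta_k\in\mathrm{Lip}_{1+\rho}$ with $\theta_k(0)=0$ and $d(\varphi_t(x),\varphi_{\theta_k(t)}(y_k))<\delta$ for $t\in[0,T]$. Set $T_{k+1}:=T_k+\theta_k(T)$ and $y_{k+1}:=\varphi_{T_{k+1}}(x)$. The bound $d(y_{k+1},y)<\delta$, combined with $B(x,r_0)\subset U$ (which is a consequence of $x\in K\setminus V$ and $d(K\setminus V,K\setminus U)>r_0$), keeps $y_{k+1}$ in $U\cap B(x,r_0/2)$ once $\delta$ and the initial $d(y,x)$ have been reduced.

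\paragraph{Exactness of $\mathfrak{h}_x(y_{k+1})=p_{k+1}$: the main step.}
The delicate point is that the identification must propagate without error; this is the crux of the argument. Apply Part~2 of Global invariance with $v:=p_k$. By the inductive identity $\mathfrak{h}_{y_k,x}(0_{y_k})=p_k$ and the cocycle of Definition~\ref{Def:identification}, one obtains $v':=\mathfrak{h}_{x,y_k}(p_k)=0_{y_k}$, so $\psi_{\theta_k(T)}(v')=0_{y_{k+1}}$. Taking $t=T$ in conclusion~2.ii, which is allowed because $\varphi_T(x)=y\in U$, yields $\mathfrak{h}_{y_{k+1},y}(0_{y_{k+1}})=\psi_T(p_k)$. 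Applying the cocycle once more to the three points $x,y,y_{k+1}$, which are pairwise $r_0$-close, gives
\begin{equation*}
\mathfrak{h}_x(y_{k+1})=\mathfrak{h}_{y,x}\bigl(\mathfrak{h}_{y_{k+1},y}(0_{y_{k+1}})\bigr)=\mathfrak{h}_{y,x}(\psi_T(p_k))=\overline\psi_T(p_k)=p_{k+1},
\end{equation*}
closing the induction. Since by hypothesis $p_k\to p$, we conclude $\mathfrak{h}_x(y_k)\to p$ as required.

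\paragraph{Conclusion.}
Proposition~\ref{p.closing0} produces a periodic point $y_\infty\in K$ with $\mathfrak{h}_x(y_\infty)=p$; the Local injectivity of $\mathfrak{h}_x$ on a neighborhood of $p$ forces the full sequence $(y_k)$ (not just a subsequence) to converge to $y_\infty$. Finally, since the time gaps $T_{k+1}-T_k=\theta_k(T)$ lie in $[T/(1+\rho),(1+\rho)T]$ and are therefore uniformly bounded, any large $t$ decomposes uniquely as $t=T_{k_t}+s_t$ with $s_t$ in a bounded interval, so $\varphi_t(x)=\varphi_{s_t}(y_{k_t})$ accumulates only on the orbit $\cO$ of $y_\infty$ by continuous dependence of the flow. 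This establishes the convergence of the positive orbit of $x$ to $\cO$.
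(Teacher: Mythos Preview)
Your argument is correct and follows the same strategy as the paper's proof: build $y_k=\varphi_{T_k}(x)$ with $\mathfrak{h}_x(y_k)=\overline\psi_T^{\,k}(0_x)$ inductively via the Global invariance, then invoke Proposition~\ref{p.closing0} and use the uniformly bounded gaps $T_{k+1}-T_k$ to conclude; you spell out the induction (in particular the exact identity $\mathfrak{h}_x(y_{k+1})=p_{k+1}$ via Part~2.ii and the cocycle) more explicitly than the paper, which simply asserts the existence of such a sequence $T_k$.

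One small point to tighten: the Local injectivity in Definition~\ref{d.compatible} is only stated for $\mathfrak{h}_x(y)$ close to $0_x$, not near an arbitrary $p$, so ``Local injectivity of $\mathfrak{h}_x$ on a neighborhood of $p$'' is not literally available. The fix (which is what the paper does) is to transfer via the cocycle to $y_\infty$: since $\mathfrak{h}_x(y_\infty)=p$, one has $\mathfrak{h}_{y_\infty}(y_k)=\mathfrak{h}_{x,y_\infty}(\mathfrak{h}_x(y_k))\to 0_{y_\infty}$, and Local injectivity at $y_\infty$ then yields $\varphi_{t_k}(y_k)\to y_\infty$ for some $t_k\in[-1/4,1/4]$. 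This gives convergence of the full sequence only after replacing $T_k$ by $T_k+t_k$, but the gaps remain bounded and your final paragraph goes through unchanged.
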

\begin{proof}
From our choice of $V$ and $r_0$, we have $B(x_0,r_0)\subset U$.
By Global invariance, there exists a sequence $T_k\to +\infty$
such that $y_k:=\varphi_{T_k}(x)$ projects by $\mathfrak{h}_x$ on $\overline \psi_T^k(0_x)$
and $|T_{k+1}-T_k|$ is uniformly bounded in $k$.

Since $\overline \psi_T^k(0_x)=\mathfrak{h}_x(y_k)$ converges to $p$,
we can apply the previous lemma so that a subsequence of $(y_k)_{k\in\NN}$ converges to a $T'$-periodic point $y\in K$. Note that $\mathfrak{h}_y(y_k)$ converges to $0_y$.
By Local injectivity, one can then modify each time $T_k$ by adding a real number in $[-1,1]$
so that $(y_k)_{k\in\NN}$ converges to $y$.
Since $|T_{k+1}-T_k|$ is uniformly bounded in $k$, this proves that the $\omega$-limit set of $x$ is the orbit of $y$.
\end{proof}

\section{Existence of identifications}\label{Existence}
In this section we complete the proof of the Main Theorem.

\begin{Theorem}\label{Thm:local-fibered-flow-model}
Let $X$ be a $C^k$ vector field, $k\geq 1$, on a closed manifold $M$
and $\Lambda$ be a compact set , invariant by the flow $(\varphi_t)_{t\in\RR}$
associated to $X$, such that $DX(\sigma)$ is invertible for each
$\sigma\in {\rm Sing}(X)\cap \Lambda$.
Let $(\cN,\widehat{\psi^*})$ be the local $C^k$ fibered flow over
the flow $(\widehat\Lambda,\widehat\varphi)$ and let $i,I$ be the maps
as in the first part of the statement of the Main Theorem.

Then for any open set $\widehat U\subset \widehat \Lambda$
whose closure is disjoint from $i(\Lambda\setminus{\rm Sing}(X))$, there exist $C>0$ and an identification $\widehat{\mathfrak{h}}$ on $\widehat U$ which is compatible with the flow $(\widehat{\psi_{C\cdot t}^*})_{t\in\RR}$.
\end{Theorem}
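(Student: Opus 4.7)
The plan is to construct $\widehat{\mathfrak{h}}$ as a Poincar\'e-map for the extended fibered flow $\widehat{\psi^*}$ along local $\widehat\varphi$-cross-sections in $\widehat M$, after rescaling time by a carefully chosen constant $C>0$.

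First I would reduce to one singularity. The closure $\overline{\widehat U}\subset \widehat\Lambda$ is compact and disjoint from $i(\Lambda\setminus \sing(X))$, so it lies in the divisor $\bigsqcup_{\sigma\in \sing(X)\cap \Lambda} PT_\sigma M$, a finite disjoint union since $DX(\sigma)$ is invertible at each $\sigma$. After shrinking $\widehat U$ slightly, I may assume $\overline{\widehat U}$ is disjoint from the fixed points of $\widehat\varphi$ (the eigendirections of $DX(\sigma)$), so that $\|\widehat X\|$ is bounded below on $\overline{\widehat U}$. On $PT_\sigma M$ the periodic orbits of $\widehat\varphi$ arise from complex conjugate pairs of eigenvalues $a\pm ib$ of $DX(\sigma)$ with $b\ne 0$, and have the finitely many periods $\pi/|b_i|$. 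Choosing $C>0$ with $2C<\min_i \pi/|b_i|$ makes every nontrivial return time of $\widehat\varphi_{Ct}$ in $\overline{\widehat U}$ exceed $2$, which together with the lower bound on $\|\widehat X\|$ yields the ``No small period'' clause of Definition~\ref{d.compatible}.

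Next I would define the identification as a Poincar\'e map inside flow boxes. Cover $\widehat U$ by finitely many flow boxes $V_\alpha$ for $\widehat\varphi_{C\cdot}$, each equipped with a codimension-one cross-section $\Sigma_\alpha\subset V_\alpha$ in $\widehat M$ transverse to $\widehat X$, a basepoint $x_\alpha\in \Sigma_\alpha\cap \widehat U$, and an affine $C^k$ trivialization $\Theta_\alpha\colon \widehat\cN|_{\Sigma_\alpha}\to \widehat\cN_{x_\alpha}$ whose affine part records the position of points of $\Sigma_\alpha$ via an exponential-type chart in $\widehat M$ and whose linear part trivializes the vector bundle; the needed regularity near the divisor comes from Proposition~\ref{p.compactify-lifted} for the extended lifted flow. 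For $y\in V_\alpha$, let $\tau_\alpha(y)$ be the unique short time with $\widehat\varphi_{C\tau_\alpha(y)}(y)\in \Sigma_\alpha$, and set
\[
H_{y,\alpha}(v):=\Theta_\alpha\bigl(\widehat{\psi^*}_{C\tau_\alpha(y)}(v)\bigr),\qquad v\in \widehat\cN_y,\ \|v\|<\beta_0.
\]
For $x,y\in V_\alpha$ I set $\widehat{\mathfrak{h}}_{y,x}(v):= H_{x,\alpha}^{-1}\circ H_{y,\alpha}(v)$; the cocycle $\widehat{\mathfrak{h}}_{z,x}\circ \widehat{\mathfrak{h}}_{y,z}=\widehat{\mathfrak{h}}_{y,x}$ is then tautological. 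To merge the boxes, I arrange the trivializations $\Theta_\alpha,\Theta_\beta$ so that on overlaps their transition matches the natural flow holonomy between $\Sigma_\alpha$ and $\Sigma_\beta$, giving a globally well-defined $\widehat{\mathfrak{h}}$ on $\widehat U$. The $C^k$-fibered regularity and continuous dependence on $(x,y)$ follow from Theorem~\ref{t.compactified2} and an implicit function theorem application to $\tau_\alpha$.

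Local invariance $\widehat{\mathfrak{h}}_x\circ\widehat{\psi^*}_{Ct}=\widehat{\mathfrak{h}}_x$ follows directly from the identity $\tau_\alpha(\widehat\varphi_{Ct}(y))+t=\tau_\alpha(y)$. Local injectivity holds because the affine part of $H_{y,\alpha}$ records the position of the flow-projection of $y$ onto $\Sigma_\alpha$, so smallness of $\|\widehat{\mathfrak{h}}_{y,x}(0_y)\|$ forces $y$ to lie close to the $\widehat\varphi_{C\cdot}$-orbit of $x$ at a short time. The main obstacle is \emph{Global invariance}: given close $y,y'\in \widehat U$ and $u\in\widehat\cN_y$, $u'\in\widehat\cN_{y'}$ with $\widehat{\mathfrak{h}}_y(u')=u$, whose orbits $\widehat{\psi^*}_{Ct}(u)$ and $\widehat{\psi^*}_{Ct'}(u')$ stay of norm $<\beta$ for $t\in I$ and $t'\in I'$, I need a Lipschitz reparametrization $\theta\in {\rm Lip}_{1+\rho}$ with $\theta(0)=0$ and $d(\widehat\varphi_{Ct}(y),\widehat\varphi_{C\theta(t)}(y'))<\delta$ on $I\cap \theta^{-1}(I')$, together with the commutation property (2.ii). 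I would define $\theta$ implicitly via the condition that $\widehat\varphi_{C\theta(t)}(y')$ lies on the cross-section through $\widehat\varphi_{Ct}(y)$, and use Theorem~\ref{t.non-degeneracy}'s uniform $C^k$-continuity of $\widehat{\psi^*}$ in the base direction to propagate closeness of the two base orbits as long as the fiber orbits remain near the zero-section; property (2.ii) then follows from the cocycle of $\widehat{\mathfrak{h}}$ combined with Local invariance.
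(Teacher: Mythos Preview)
Your reading of the hypothesis is the opposite of what the paper intends. The printed statement contains a typo: since $\widehat\Lambda$ is defined as the closure of $i(\Lambda\setminus\sing(X))$ in $\widehat M$ (Section~\ref{ss.first-part}), that image is dense in $\widehat\Lambda$, so any nonempty open $\widehat U\subset\widehat\Lambda$ whose closure is \emph{disjoint} from it would be empty. The proof in Section~\ref{Existence} makes the intended condition clear, namely $\overline{\widehat U}\subset i(\Lambda\setminus\sing(X))$: the paper sets $U:=p(\widehat U)$ and uses explicitly that ``$U$ is at positive distance from $\Sing(X)$''. Your first reduction (``so it lies in the divisor'') therefore sends the entire construction to the wrong region, and the periodic-orbit analysis on $PT_\sigma M$ is beside the point.

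Even setting aside the misreading, the Global invariance step has a genuine gap. The paper does not build flow boxes in $\widehat M$; for close regular $x,y\in U$ it defines $\mathfrak{h}_{y,x}$ as the actual holonomy of $\varphi$ on $M$ between the transversals $\exp_y(\cN_y)$ and $\exp_x(\cN_x)$, conjugated by the rescalings $\|X(y)\|,\|X(x)\|$. The crucial payoff is that $\mathfrak{h}_y(u')=u$ then means $\exp_y(\|X(y)\|u)$ and $\exp_{y'}(\|X(y')\|u')$ lie on the \emph{same} $\varphi$-orbit in $M$ at a short time shift, and the bound $\|\psi^*_t(u)\|<\beta$ means this common orbit stays within $\beta\|X(\varphi_t(y))\|$ of $\varphi_t(y)$. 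Lemmas~\ref{l.project} and~\ref{l.flow} then produce the reparametrization as $\theta=(\theta_0')^{-1}\circ\theta_0$ from two hitting-time functions, and items (2.i)--(2.ii) follow by the same shadowing. In your scheme, $\widehat{\mathfrak{h}}_y(u')=u$ only says that $u,u'$ agree under an arbitrary trivialization $\Theta_\alpha$; no actual orbit in $M$ links them, so ``propagate closeness of the two base orbits as long as the fiber orbits remain near the zero-section'' has no mechanism. Theorem~\ref{t.non-degeneracy} gives uniform $C^k$-dependence of $\psi^*_t$ on the base point, but it does not convert a bound on $\|\widehat\psi^*_t(u)\|$ into long-time closeness of two distinct base orbits, which is exactly what Global invariance demands.
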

Now we give the proof of Theorem~\ref{Thm:local-fibered-flow-model}.

\subsection{Construction of identifications}
Since there is no fixed point of $(\widehat \varphi_{t})_{t\in\RR}$ in the boundary of $\widehat U$,
one can rescale the time (i.e. consider the new flow $t\mapsto {\widehat\varphi}_{C\cdot t}$ for some large $C>0$)
so that any periodic orbit which meets the closure of $\widehat U$ has period larger than $10$.
Then the ``No small period'' Property follows from Remark~\ref{r.identification}(d).
In order to simplify the presentation, we will now work with the flows after time rescaling but denote them
as $(\varphi_t)_{t\in\RR}$ and $(\psi_t)_{t\in\RR}$ without mentioning the constant $C>0$.
\medskip

Let us remember that  $p$ is the projection $\widehat\Lambda\to\Lambda$
and let us define $U:=p(\widehat U)$.
The identification is a family of diffeomorphisms $\widehat{\mathfrak{h}}_{y,x}$
defined for close points $x,y\in \widehat U$. It is enough to work with the flows
$(\varphi_t)_{t\in\RR}$ on $\Lambda\setminus \sing(X)$ and
$(\psi^*_t)_{t\in \RR}$ on $\cN$ and to define diffeomorphisms
${\mathfrak{h}}_{y',x'}\colon \cN_{y'}\to \cN_{x'}$ for close points $x',y'\in U$.
We then conjugate these maps $\mathfrak{h}_{y',x'}$ by $I\colon \cN\to \widehat \cN$
in order to define $\widehat {\mathfrak{h}}_{y,x}:=I\circ \mathfrak{h}_{p(y),p(x)}\circ I^{-1}$
on the fibers of $\widehat \cN$.

The construction of the maps $\mathfrak{h}_{y,x}$ for close points $x,y\in U$
is similar to the definition of the non-linear Poincar\'e flow. We use the exponential maps
in order to project balls $B(0,\beta)$ in each fiber of the normal bundle $\cN$.
For identifications we use the exponential maps at $x,y$ instead of $x,\varphi_t(x)$.

As discussed in Sections~\ref{ss.NLPF} and~\ref{ss.rescaled},
since $U$ is at positive distance from $\Sing(X)$,
if $\overline \beta>0$ is small enough, for any point $x$ in a neighborhood of $U$,
the image of $B(0,\overline \beta)\subset \cN_{x}$ by the exponential map $\exp_{x}$ is transverse to the vector field $X$. Consequently, for $r_0>0$, $\beta_0\in (0,\overline \beta)$ and $\varepsilon>0$ small,
if $x,y\in U$ satisfy $d(x,y)<r_0$, then for any $u\in B(0,\beta_0)\subset \cN_{y}$,
there exists a unique time $s\in (-\varepsilon,\varepsilon)$ such that
$\varphi_s(\exp_{y}(\|X(y)\|\cdot u))$ belongs to $\exp_{x}(B(0,\overline \beta))$. After rescaling, we define
$$\mathfrak{h}_{y,x}(u):=\|X(x)\|^{-1}\cdot \exp_{x}^{-1}\circ \varphi_s\circ \exp_{y}(\|X(y)\|\cdot u).$$
Since $X$ is $C^k$, the map $\mathfrak{h}_{y,x}$ is $C^k$ also.
By the uniqueness of the parameter $s$, we obtain the relation $\mathfrak{h}_{z,x}\circ \mathfrak{h}_{y,z}=\mathfrak{h}_{y,x}$. The maps $\mathfrak{h}_{y,x}$ can be extended as diffeomorphisms
$\cN_{y}\to \cN_x$ by gluing with the affine maps $D\mathfrak{h}_{y,x}(0)$ outside a small uniform neighborhood
of $0$ in $\cN_x$. In this way the family $\{\mathfrak{h}_{y,x}\}$ varies continuously for the
$C^k$ topology, with respect to the points $x,y\in U$ such that $d(x,y)<r_0$.
\smallskip

We then check the compatibility on $U$ with the flow $({\psi_{t}^*})_{t\in\RR}$.
The Local injectivity follows immediately by choosing the time $t=s$ equal to the section time $s$ in the definition of the maps $\mathfrak{h}_{y,x}$.
Indeed if $\mathfrak{h}_{y,x}(0_{y})=\|X(x)\|^{-1}\exp_{x}^{-1}\varphi_s(y)$ has a small norm,
then $\varphi_t(y)$ has to be close to $x$
and $\widehat \varphi_t(y)$ is close to $x$.

In order to prove the Local invariance, let us take $x,y\in U$, $t\in [-2,2]$, $u\in B(0, \beta_0)\subset \cN_{y}$ such that $y$ and $\widehat{\varphi}_{t}(y)$ are $r_0$-close to $x$.
If $r_0$ has been chosen small enough the ``No small period'' Property  implies that $t$ is small.
Then the definition of the rescaled nonlinear Poincar\'e flow $(\psi^*_t)_{t\in\RR}$,
and the uniqueness of $s$ in the definition of the maps $\mathfrak{h}_{x,y}$ implies
$\mathfrak{h}_{ \varphi_t(y),x}\circ  {\psi^*_{t}}(u)=\mathfrak{h}_{y,x}(u)$. This gives the Local invariance.
The last property of Definition~\ref{d.compatible} will be checked in the next section.
 
\subsection{Global invariance}
We finally check the Global invariance in Definition~\ref{d.compatible} for the
local flow $(\psi^*_t)_{t\in\RR}$ on the open set $U$.
Let us fix $\delta,\rho>0$ small: by reducing $\delta$, one can assume that Lemma~\ref{l.flow} above holds.
One then chooses $t_0>0$ small such that $d(x,\varphi_t(x))<\delta$ for any $x\in M$ and $t\in [-t_0,t_0]$.
One fixes $r>0$ and $\beta\in (0,\delta)$ small such that:
\begin{itemize}
\item[a)]
for any $y,y'\in U$, $u\in \cN_y$, $u'\in \cN_{y'}$  with $d(y,y')<r$, $\|u\|,\|u'\|<\beta$ and
$\mathfrak{h}_y(u')=u$, then
$\varphi_t\exp_y({\|X(y)\|}u)=\exp_{y'}({\|X(y')\|}u')$ for some $t\in [-t_0,t_0]$
(as for Local injectivity);
\item[b)] $\delta,t_0, 4\beta$ satisfy the Lemma~\ref{l.project};
\item[c)] every $x\in M$ and $w\in T_xM$ with $\|w\|\le \beta\cdot\|X(x)\|$ satisfy
 $d(x,\exp_x(w))<\delta$.
\end{itemize}

Consider any $y,y'\in U$ , $u\in \cN_y$, $u'\in \cN_{y'}$ and $I,I'$ as in the statement of the Global invariance
for $(\psi^*_t)_{t\in\RR}$ on $U$.
Since $\|u\|<\beta<\delta$,
Lemma~\ref{l.flow} can be applied
to $y$ and $z:=\exp_y(\|X(y)\|\cdot u)$: for each $t\in (0,1)$, one defines $\theta_0(t)>0$ to be equal to the $t'$ given by Lemma~\ref{l.flow}.
The map $\theta_0$ is $(1+\rho)$-bi-Lipschitz and increasing. Moreover $\theta_0(0)=0$.
Since $\|\psi^*_t(u)\|< \beta<\delta$ for any $t\in I$,
one has $\psi_t(\|X(y)\|\cdot u)\in B(0,\delta\|X(\varphi_t(x))\|)$ and
one can apply inductively Lemma~\ref{l.flow} to the points $\varphi_t(y)$, $\psi_t(\|X(y)\|\cdot u)$.
This defines $\theta_0$ on $I$. One gets:
$$\forall t\in I,~~~\exp_{\varphi_t(y)}\circ \psi_t(\|X(y)\|\cdot u)=\varphi_{\theta_0(t)}(z).$$
The same argument for $y'$ and $z'=\exp_{y'}(\|X(y')\|\cdot u')$ defines a map $\theta_0'\colon I'\to \RR$.
\medskip

Let us now consider $s\in I\cap \theta_0^{-1}\circ \theta_0'(I')$.
{By (a),} since $\mathfrak{h}_y(u')=u$, there exists $t\in [-t_0,t_0]$ such that $\varphi_t(z)=z'$.
By Property (c) above, the points $\varphi_s(y)$ and $\varphi_{\theta_0(s)}(z)$ are $\delta$-close.
By our choice of $t_0$ and since $|t|\leq t_0$, the points $\varphi_{\theta_0(s)}(z)$  and $\varphi_{\theta_0(s)+t}(z)= \varphi_{\theta_0(s)}(z')$ are $\delta$-close.
Since $\theta_0(s)\in \theta'_0(I')$, by Property (c) the points $\varphi_{\theta_0(s)}(z')$ and $\varphi_{(\theta_0')^{-1}\circ\theta_0(s)}(y')$ are $\delta$-close.
Consequently, the points $\varphi_s(y)$ and $\varphi_{\theta(s)}(y')$ are $3\delta$-close, where $\theta=(\theta_0')^{-1}\circ\theta_0$.
Note that $\theta$ is bi-Lipschitz for the constant $(1+\rho)^2<1+3\rho$ and satisfies $\theta(0)=0$. This proves Part (1) of the Global invariance for the numbers $\overline\delta=3\delta$ and $\overline \rho=3\rho$.
\medskip

We now take $v\in \cN_y$, $v'=\mathfrak{h}_{y'}(v)$ in $\cN_{y'}$ such that
$\|\psi^*_{s}(v)\|<\beta$ for each $s\in I\cap \theta^{-1}(I')$.
Set $\zeta=\exp_y(\|X(y)\|\cdot v)$.
{By Lemma~\ref{l.project},} there is a unique $t'\in (-t_0,t_0)$
such that {$\varphi_{t'}(\zeta)=\exp_{y'}(w')$ for some $w'\in B(0,\delta)\subset \cN_{y'}$.}
By definition of $\mathfrak{h}_{y,y'}$, it coincides with
$\exp_{y'}(\|X(y')\|\cdot v')$.

Arguing as above, there exists $\theta_1$ such that $\theta_1(0)=0$ and
$\exp_{\varphi_s(y)}\circ \psi_s(\|X(y)\|\cdot v)=\varphi_{\theta_1(s)}(\zeta)$ for each $s\in I$.
By assumption $d(\varphi_{\theta_1(s)}(\zeta), \varphi_s(y))=\|\psi_s(\|X(y)\|\cdot v)\|$
is smaller than $\beta\cdot \|X(\varphi_s(y))\|$.
Similarly,
$$d(\varphi_{\theta_0(s)}(z),\varphi_s(y))\leq 
 \beta\cdot \|X(\varphi_s(y))\|,$$
 $$d(\varphi_{\theta'_0(s')}(z'),\varphi_{s'}(y'))\leq 
 \beta\cdot \|X(\varphi_{s'}(y'))\|.$$
 Since $z'=\varphi_t(z)$, to each $s\in I\cap \theta^{-1}(I')$
 one associates $s'$ such that $\theta_0(s)=\theta'_0(s')+t$, one gets
 $$d(\varphi_{\theta_0(s)}(z),\varphi_{s'}(y'))\leq 
 \beta\cdot\|X(\varphi_{s'}(y'))\|.$$
 If $\beta$ has been chosen small enough, one deduces $\|X(\varphi_{s}(y))\|<2 \|X(\varphi_{s'}(y'))\|$.
Combining the previous inequalities, one gets
\begin{equation}\label{e.bound-projection}
d(\varphi_{\theta_1(s)}(\zeta),\varphi_{s'}(y'))\leq 
 4\beta\cdot\|X(\varphi_{s'}(y'))\|.
 \end{equation}
By Lemma~\ref{l.project} (for $\delta, t_0, 4\beta$ as in Item (b) above),
there is $\sigma(s)\in (-t_0,t_0)$
 such that $\varphi_{\theta_1(s)+\sigma(s)}(\zeta)$ belongs to the image by $\exp_{\varphi_{s'}(y')}$ of
 $B(0,\delta\cdot \|X(\varphi_{s'}(y'))\|)\subset \cN_{\varphi_{s'}(y')}$. In the case $s'=0$, since $\sigma$ is small
 and $\mathfrak{h}_{y'}(v)=v'$,
 {the definition of $\mathfrak{h}_{y'}$ gives} $\varphi_{\theta_1(s)+\sigma(s)}(\zeta)=\exp_{y'}(\|X(y')\|\cdot v')$. 
 
 Applying Lemma~\ref{l.flow} {inductively}, one gets $\varphi_{\theta_1(s)+\sigma(s)}(\zeta)=\exp_{\varphi_{s'}(y')}(\psi_{s'}(\|X(y')\|\cdot v'))$ for any $s\in I\cap \theta^{-1}(I')$.
By~\eqref{e.bound-projection} and Lemma~\ref{l.project}, one deduces $\|\psi_{s'}(\|X(y')\|\cdot v')\|\leq \delta\cdot \|X(\varphi_{s'}(y'))\|$,
that is $\|\psi^*_{s'}(v')\|\leq \delta$, giving Part (2.i) of the Global invariance.
\medskip

We have obtained
$$\varphi_{\sigma(s)}\circ \exp_{\varphi_s(y)}\circ \psi_s(\|X(y)\|\cdot v)=\exp_{\varphi_{s'}(y')}(\psi_{s'}(\|X(y')\|\cdot v')).$$
When $\varphi_s(y)\in U$,
one deduces by definition of identification,
$$\mathfrak{h}_{\varphi_{s}(y)}\circ \psi_{s'}(\|X(y')\|\cdot v')= \psi_s(\|X(y)\|\cdot v).$$
By definition of $\theta$ and $s'$, one notices that $\theta(s)$ and $s'$ are close.
Hence $$\mathfrak{h}_{\varphi_{s'}(y')}\circ \psi_{\theta(s)}(\|X(y')\|\cdot v')=\psi_{s'}(\|X(y')\|\cdot v').$$
This gives $\mathfrak{h}_{\varphi_s(y)}\circ \psi^*_{\theta(s)}(v')=\psi^*_s(v)$, i.e. Part (2.ii). The Global invariance is now proved. \qed

\section{Negative Lyapunov exponents and periodic orbits}\label{Sec:negative-periodic}
We give in this section an application of the Main Theorem.

\begin{Proposition}
Let $X$ be a $C^1$ vector field on a closed manifold $M^d$ such that $DX(\sigma)$ is invertible at each singularity $\sigma$ and let $\mu$ be an ergodic non-atomic probability measure with $d-1$ negative Lyapunov exponents. Then $\mu$ is supported on a periodic orbit.
\end{Proposition}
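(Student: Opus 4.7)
Given the excerpt, my plan is to apply the closing Corollary~\ref{c.closing0} at a Pesin-regular, recurrent point of the extended rescaled nonlinear Poincar\'e flow $\widehat{\psi^*}$. Since $\mu$ is non-atomic and ergodic, it gives zero mass to the finite set $\Sing(X)$, so setting $\Lambda:=\supp(\mu)$ the pushforward $\hat\mu:=i_*\mu$ is a $\widehat\varphi$-invariant ergodic probability measure on $\widehat\Lambda$, entirely supported in the open regular part $i(\Lambda\setminus\Sing(X))$.

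My first task is to show that the $\widehat{\Psi^*}$-Lyapunov exponents of $\hat\mu$ on the $(d-1)$-dimensional bundle $\widehat\cN$ are all negative. Oseledets' theorem applies because $\widehat{\Psi^*}$ is continuous on the compact space $\widehat\Lambda$. At $\hat\mu$-a.e.\ $\hat x=i(x)$ one has
\[
\tfrac{1}{t}\log\|\widehat{\Psi^*}_t v\| \;=\; \tfrac{1}{t}\log\|\Psi_t v\| \;+\; \tfrac{1}{t}\big(\log\|X(x)\|-\log\|X(\varphi_t(x))\|\big),
\]
and the rescaling correction is the Birkhoff integral of the bounded function $g(y):=\langle X(y),DX(y)X(y)\rangle/\|X(y)\|^2$; since $\|X\|$ is bounded on $M$ one has $\int g\,d\mu\le 0$, and a strict inequality would force $\|X(\varphi_t(x))\|\to 0$ on a $\mu$-full-measure set, contradicting $\mu(\Sing(X))=0$ by ergodicity. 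Hence $\int g\,d\mu=0$ and the $\widehat{\Psi^*}$-exponents coincide with the $\Psi$-exponents on $\cN$, which are the $d-1$ negative tangent-flow exponents provided by hypothesis.

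Next I invoke Pesin's stable-manifold theorem for the local $C^1$ fibered flow $\widehat{\psi^*}$ with respect to $\hat\mu$: for $\hat\mu$-a.e.\ Pesin-regular $\hat x$ there is a ball $B_{\hat x}\subset\widehat\cN_{\hat x}$ around the zero section such that $\widehat{\psi^*}_t(B_{\hat x})$ remains in a uniform $\beta_0$-neighborhood of the zero section and shrinks exponentially to $0$ as $t\to+\infty$. Poincar\'e recurrence further lets me pick $\hat x$ and a sequence $T_n\to+\infty$ with $\widehat\varphi_{T_n}(\hat x)\to\hat x$. By the last part of the Main Theorem (applied on a small open neighborhood $\widehat U\ni\hat x$ whose closure lies in the regular part of $\widehat\Lambda$), there is a compatible $C^1$ identification $\widehat{\mathfrak h}$ on $\widehat U$. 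For $T=T_n$ large, $\overline{\widehat{\psi^*}}_T:=\widehat{\mathfrak h}_{\hat x}\circ\widehat{\psi^*}_T$ is a strict contraction on a small ball $B\subset B_{\hat x}$, with a unique fixed point $p\in B$ to which the iterates $(\overline{\widehat{\psi^*}}_T)^k(0_{\hat x})$ converge. Choosing an auxiliary open set $V$ so that $\hat x\in\widehat\Lambda\setminus V\subset\widehat U$, Corollary~\ref{c.closing0} then gives that the forward $\widehat\varphi$-orbit of $\hat x$ converges to a periodic orbit of $\widehat\varphi$; projecting by $p\colon\widehat M\to M$, the forward $\varphi$-orbit of $x$ converges to a periodic orbit of $X$, and ergodicity of $\mu$ forces $\supp(\mu)$ to coincide with this periodic orbit.

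The main obstacles I anticipate are the verification $\int g\,d\mu=0$ (handled as above) and the invocation of Pesin theory for the abstract local $C^1$ fibered flow $\widehat{\psi^*}$ over the compact space $\widehat\Lambda$: classical Pesin charts are built for diffeomorphisms, and one must adapt them to this fibered setting, but the uniform compactness and continuity provided by the Main Theorem reduce this to arguments already present in~\cite{LY,Y}.
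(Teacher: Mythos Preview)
Your proposal follows the same overall strategy as the paper: lift $\mu$ to $\hat\mu$ on $\widehat\Lambda$, show $\widehat{\Psi^*}$ has all negative exponents, obtain a contracting ball in the fiber over a recurrent point, then apply Corollary~\ref{c.closing0} via the compatible identification. The one substantive difference is how the contracting ball is produced. You invoke Pesin's stable-manifold theorem for the $C^1$ fibered flow, and correctly flag the regularity issue this raises. The paper bypasses Pesin theory entirely: from $\int\log\|\widehat{\Psi^*_s}\|\,d\hat\mu=-\lambda<0$ and Birkhoff, it applies the Pliss lemma to obtain (after passing to a suitable iterate of a generic point) the bound $\prod_{i=0}^{n-1}\|\widehat{\Psi^*_s}(\widehat\varphi_{is}(x))\|\le C e^{-\lambda n/2}$ for \emph{all} $n\ge 0$; then the \emph{uniform} $C^1$ continuity of $\widehat{\psi^*}$ supplied by the compactification yields a fixed radius $r$ on which $\widehat{\psi^*_t}$ is a $1/2$-contraction for all large $t$. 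This is more elementary and, crucially, needs only $C^1$ regularity with no tempering or Pesin charts. Your Birkhoff argument that the rescaling correction has zero average is more explicit than the paper's one-line claim (which just says $D\varphi|_{E^s}$ is conjugate to $\Psi$, and that rescaling preserves negativity of the exponents); both are fine.
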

\begin{proof}
Let us consider the splitting provided by Oseledets theorem
and $\mu$-almost every point $T_xM=E^s_x \oplus (\RR\cdot X(x))$.
The tangent flow restricted to $E^s$ is conjugated to the linear Poincar\'e flow.
Let us apply the Main Theorem. The measure $\mu$ is associated to an invariant measure
$\widehat \mu$ on the blownup manifold $\widehat M$.
The extended rescaled linear Poincar\'e flow $\widehat {\Psi^*}$
has only negative Lyapunov exponents, hence the same holds for the non-linear flow
$\widehat {\psi^*}$ along the $0$-section above the orbit of $\widehat\mu$-almost every point $x$.

By definition of the Lyapunov exponents, there are $s,\lambda>0$ such that
$\int \log \|\widehat {\Psi^*_s}\| d\widehat \mu =-\lambda$,
hence $\lim_{+\infty} \tfrac 1 n \sum_{i=0}^{n-1} \log \|\widehat {\Psi^*_s}(\widehat \varphi_{is}(x))\| = -\lambda$.
By Pliss lemma, we get $C>0$ such that
$$\prod_{i=0}^{n-1} \|\widehat {\Psi^*_s}(\widehat \varphi_{is}(x))\| < C \exp(-\lambda N/2), \quad \forall n\geq 0.$$
Since $\widehat \Psi^*$ is the tangent flow to $\widehat {\psi^*}$ along the $0$-section,
there exists a uniform neighborhood $B(0_x,r)$ of $0_x$ in the fiber $\widehat \cN_x$
such that $\widehat \psi^*_t$ is defined on the domain $B(0_x,r)$ and is a $1/2$-contraction for all $t>0$ large enough.

The orbit of $x$ is positively recurrent, hence there exists a large iterate
$\widehat \varphi_T(x)$ which is arbitrarily close to $x$.
Let $\mathfrak{h}$ be the identification provided by the Main Theorem.
Since $x,\widehat \varphi_T(x)$ are close, $\mathfrak{h}_{x,\widehat \varphi_T(x)}$
is $C^1$ close to the identity, hence
$\overline\psi_T:=\mathfrak{h}_{x,\widehat \varphi_T(x)}\circ \widehat \psi^*_T$ is a $1/2$-contraction of
$B(0_x,r)$ into itself and admits a fixed point $p$.
Corollary~\ref{c.closing0} implies that the forward orbit of $x$ converges to a periodic orbit of $\widehat \varphi$.
Since the orbit of $x$ equidistributes toward the measure $\widehat \mu$,
one concludes that $\widehat \mu$ is supported on a periodic orbit, and the same holds for $\mu$
and the initial flow $\varphi$.
\end{proof}


\vskip 5pt

\begin{tabular}{l l l}
\emph{\normalsize Sylvain Crovisier}
& \quad &
\emph{\normalsize Dawei Yang}
\medskip\\

\small Laboratoire de Math\'ematiques d'Orsay
&& \small School of Mathematical Sciences\\
\small CNRS - Universit\'e Paris-Saclay
&& \small Soochow University\\
\small Orsay 91405, France
&& \small Suzhou, 215006, P.R. China\\
\small \texttt{Sylvain.Crovisier@universite-paris-saclay.fr}
&& \texttt{yangdw1981@gmail.com}\\
&& \texttt{yangdw@suda.edu.cn}\
\end{tabular}

\end{document}